\title{Growth of periodic orbits and generalized diagonals for typical triangle billiards}
\author{Dmitri Scheglov\\
University of Oklahoma}
\theoremstyle{plain}
\newtheorem{Lemma}{Lemma}[section] 
\newtheorem{Theorem}{Theorem}[section] 
\newtheorem{Corollary}{Corollary}[section] 
\newtheorem{thm}{Theorem}
\newtheorem{conj}{Conjecture}
\begin{document}

\maketitle
\setlength{\parindent}{0pt}

\begin{abstract}
\noindent
We prove that for any $\epsilon>0$ the growth rate $P_n$ of generalized diagonals or periodic orbits of a typical (in the Lebesgue measure sense) triangle billiard satisfies: $P_n<Ce^{n^{\sqrt{3}-1+\epsilon}}$. This provides an explicit sub-exponential estimate on the triangle billiard complexity and answers a long-standing open question for typical triangles. 
\

This also makes a progress to the problem 3 in the Katok's list of "Five most resistant problems in dynamics". The proof uses essentially new geometric ideas and does not rely on the rational approximations.

\end{abstract}

\section{Introduction}

There are several closely related definitions for a complexity of polygonal billiards. Examples include the growth of periodic orbits, growth of generalized diagonals and orbit complexity. More specifically one can also define directional complexity and position complexity ( see, say [1], [2], [3], [4], [12], [13] ).
\

Each definition measures the growth of orbits, satisfying some special property. It is an easy remark that the growth of periodic orbits cannot exceed the growth of generalized diagonals and in the paper [1] there is a precise relation between growth of generalized diagonals and the growth of all orbits. This implies that any upper estimate on the growth of generalized diagonals would automatically produce an upper estimate on other complexities.
\

\

 In our paper we estimate the growth rate of generalized diagonals.
\

A generalized diagonal is a billiard orbit which connects two vertices. The complexity function $P_n$ is a total number of generalized diagonals of length no greater than $n$. Here by length of the diagonal we mean a discrete length or the number of reflections, but it is well known that the actual geometric length is uniformly proportional to the discrete one.
\

For a polygon with $k$ sides  $P_n\leq k^n$ by trivial combinatorial reasons. 
\

In 1987 Katok [8] proved the following sub-exponential estimate:

\begin{thm}[Katok] For any polygon: $\lim\frac{\ln(P_n)}{n}=0$.
\end{thm}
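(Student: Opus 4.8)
The plan is to reduce the enumeration of generalized diagonals to an enumeration of admissible combinatorial itineraries, and then to show that this itinerary count grows subexponentially by exploiting the \emph{parabolic} (piecewise isometric) nature of a polygonal billiard.

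First I would unfold. Recall that a trajectory reflecting off sides $s_{i_1},\dots,s_{i_n}$ develops to a straight segment obtained by successively reflecting $P$ across the images of these sides, producing an $n$-fold reflected copy $P_w$ of $P$, where $w=(s_{i_1},\dots,s_{i_n})$. A generalized diagonal of combinatorial length $n$ is then a straight segment joining a vertex of $P$ to a vertex of $P_w$ that crosses the successive reflected sides in the prescribed order. Given the word $w$ and an ordered pair of endpoint vertices, such a segment is unique, so each admissible $w$ carries at most $k^2$ generalized diagonals. Writing $c(n)$ for the number of admissible words of length at most $n$, this yields $P_n \le k^2\, c(n)$, and it suffices to prove $\tfrac1n\log c(n)\to 0$.

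The quantity $c(n)$ is exactly the combinatorial (directional) complexity: if $\mathcal P$ denotes the partition of the boundary phase space $\Omega$ by the side struck next and $T$ the billiard map, then $c(n)$ counts the nonempty atoms of $\bigvee_{j=0}^{n-1}T^{-j}\mathcal P$, so $\tfrac1n\log c(n)$ is the topological entropy we must kill. The structural reason it should vanish is that, away from its singular set, $T$ is a local isometry transverse to the flow direction---reflections preserve the flat metric---so two orbits that share an itinerary of length $n$ stay parallel and never diverge exponentially. The dynamics is parabolic rather than hyperbolic, and this is the mechanism that forbids positive entropy.

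The delicate point is to convert this absence of expansion into a genuine subexponential bound on $c(n)$. The boundaries between atoms of the refined partition are the traces in $\Omega$ of orbits that strike a vertex within $n$ steps---that is, the generalized diagonals themselves---so $c(n)$ is governed by how fast these singular curves proliferate. In a single fixed direction the corresponding vertex-to-vertex segments are combinatorially separated and number at most polynomially in $n$; the real difficulty is controlling the set of directions that must be considered simultaneously. For a typical (irrational) polygon the group generated by the side directions is infinite, so one cannot pass to a compact translation surface and borrow the rational theory, and a naive union bound over directions is already exponential. The heart of the argument---and the step I expect to be the \textbf{main obstacle}---is therefore to bound, directly and geometrically, the number of new cut curves created at each iteration by a subexponential amount, turning the parabolic geometry into a quantitative counting estimate showing that $\#\bigvee_{j=0}^{n-1}T^{-j}\mathcal P$ cannot grow at an exponential rate.
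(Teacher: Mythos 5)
Your reduction to counting admissible itineraries and your identification of $\tfrac1n\log c(n)$ with the topological entropy of the billiard subshift are sound, and they match the setup of Katok's argument as the paper sketches it. But your proposal stops exactly where the proof has to start: you declare that the ``main obstacle'' is to convert the absence of exponential divergence (parabolicity) into a subexponential bound on $c(n)$, and you propose to do this by ``directly and geometrically'' bounding the number of new singular curves created per iteration --- without supplying that bound. That step is not a technicality; it is the entire content of the theorem. The fact that orbits sharing an itinerary stay parallel does not by itself control the number of distinct itineraries: cylinders can proliferate through cutting by singular curves even though each cylinder is an isometrically coherent family, and no direct geometric count of these cuts valid for all irrational polygons is known. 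Indeed, the paper you are reading exists precisely because such an explicit counting argument is unavailable; its main theorem achieves one only for typical triangles and only with the rate $e^{n^{\sqrt{3}-1+\epsilon}}$.

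What your outline is missing is Katok's actual mechanism for exploiting parabolicity, which is ergodic-theoretic rather than geometric. One passes to the closed subshift on $k$ symbols generated by the itineraries, shows that every ergodic shift-invariant measure is supported on sequences realized by genuine billiard configurations, and proves that every such measure has zero metric entropy: positive metric entropy would force exponential separation of typical orbit pairs sharing long itineraries, which the piecewise-isometric structure forbids. The variational principle then upgrades this to zero topological entropy, and since the cylinder count dominates $P_n$ (up to the factor $k^2$ you already noted), $\lim\frac{\ln (P_n)}{n}=0$ follows. This detour through invariant measures and the variational principle is exactly the non-constructive step the paper criticizes --- it yields no explicit rate --- but it is what closes the gap your proposal leaves open; without it, or a substitute for it, you have restated the problem rather than solved it.
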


In 1988-90 Masur [10], [11]  proved more precise estimates for any \textsl{rational-angled} polygon:
\

\begin{thm}[Masur] For any polygon with angles in $\pi\mathbb{Q}$  there are constants $C_1, C_2>0$ such that: $C_1\cdot n^2<P_n<C_2\cdot n^2$.
\end{thm}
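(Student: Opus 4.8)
The plan is to pass from the billiard to a flat (translation) surface and count saddle connections there. First I would apply the Zemlyakov--Katok unfolding: for a polygon $P$ with all angles in $\pi\mathbb{Q}$, the reflections in its sides generate a finite group $G$, and gluing the $|G|$ copies of $P$ along their edges produces a closed surface $X$ carrying a flat metric with finitely many cone points whose angles are integer multiples of $2\pi$, i.e. a translation surface. Under this construction a billiard trajectory unfolds to a straight geodesic, and a generalized diagonal --- an orbit joining two vertices --- unfolds to a saddle connection, a geodesic segment joining two cone points with none in its interior. Since the number of reflections along a trajectory is uniformly comparable to its flat length (each fundamental domain has bounded diameter and the geodesic crosses a bounded number of copies per unit length), it suffices to prove $C_1 R^2 < N(X,R) < C_2 R^2$, where $N(X,R)$ counts saddle connections of flat length at most $R$, and then transfer the estimate to $P_n$ after adjusting the constants.

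For the upper bound I would record each saddle connection $\gamma$ by its holonomy (developing) vector $v_\gamma \in \mathbb{R}^2$, so that $N(X,R) \le C\cdot\#\{v_\gamma : |v_\gamma|\le R\}$ with uniformly bounded multiplicity. The key separation estimate is that there is a constant $a=a(X)>0$ such that any two non-parallel holonomy vectors satisfy $|v_\gamma \wedge v_{\gamma'}| \ge a$: two crossing or adjacent short saddle connections span an immersed parallelogram whose area is controlled from below by the systole of $X$ and from above by the total area of $X$, which forbids the vectors from clustering. A Minkowski-type lattice-point count then shows that a set of vectors in a ball of radius $R$ with pairwise cross products bounded below has $O(R^2)$ elements, giving the upper bound. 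For the lower bound I would use the $SL(2,\mathbb{R})$-action on the moduli space of translation surfaces and the Teichm\"uller geodesic flow $g_t=\mathrm{diag}(e^t,e^{-t})$: applying $g_t$ rescales holonomy vectors, so a saddle connection becomes short under $g_t$ exactly when $v_\gamma$ lies in a thin cone about the contracting direction. If $N(X,R)$ grew slower than quadratically, one could choose times and directions making the systole of $g_t X$ tend to $0$ along a divergent trajectory in moduli space; Masur's non-divergence (equivalently, recurrence of almost every Teichm\"uller geodesic) forbids this, forcing enough short saddle connections in a positive-measure set of directions and hence $N(X,R)\ge C_1 R^2$.

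I expect the main obstacle to be the lower bound, since it is the part that genuinely requires the ergodic theory of the moduli space rather than soft flat geometry: turning ``sub-quadratic growth'' into a concrete divergence statement for $g_t X$ and then contradicting it via recurrence is where both the quantitative work and the deep input (Masur--Veech ergodicity and non-divergence) concentrate. On the upper-bound side the delicate point is to make the separation constant $a(X)$ and the passage from the cross-product condition to the $O(R^2)$ count uniform; in particular one must treat separately the directions in which $X$ decomposes entirely into cylinders, where many parallel saddle connections coexist, and the minimal directions.
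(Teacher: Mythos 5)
First, for calibration: the paper does not prove this statement at all --- it is quoted as background, attributed to Masur [10], [11], and the paper only records the mechanism in two sentences (unfold the rational billiard to a compact flat surface with conical singularities and apply Teichm\"uller theory via quadratic differentials). So the comparison here is really with Masur's own proofs. Your overall route --- Zemlyakov--Katok unfolding, identification of generalized diagonals with saddle connections, comparability of discrete and geometric length, and reduction to counting saddle connections by flat length --- is indeed Masur's route, and your lower-bound sketch, although stated only as a contradiction scheme, correctly identifies the deep input: recurrence (non-divergence) of the Teichm\"uller geodesics $g_t r_\theta X$ to compact parts of moduli space for almost every direction $\theta$.

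However, your upper bound rests on a lemma that is genuinely false for the surfaces in question. The claimed separation estimate --- a constant $a(X)>0$ with $|v_\gamma \wedge v_{\gamma'}|\geq a$ for all non-parallel saddle-connection holonomies --- is precisely the ``no small virtual triangles'' property, and by the theorem of Smillie and Weiss this property \emph{characterizes} lattice (Veech) surfaces. The unfolding of a generic rational polygon is not a lattice surface (lattice triangles, for instance, form a short explicit list by Veech, Ward, Kenyon--Smillie, Puchta), so for most rational polygons there exist non-parallel saddle connections with arbitrarily small nonzero cross product; your appeal to the systole controlling the area of an immersed parallelogram does not rescue this, since small immersed triangles with cone-point vertices do exist on non-lattice surfaces. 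Consequently the Minkowski-type count collapses, and the caveat you raise at the end (completely periodic directions, parallel families) is not where the difficulty lies. The known proofs of the quadratic upper bound (Masur's 1990 paper, and later Eskin--Masur and Vorobets) do not use pointwise separation of holonomy vectors: roughly, a saddle connection of length about $e^{t}$ becomes $\delta$-short under $g_t r_\theta$ only for $\theta$ in an interval of width about $e^{-2t}$, and one bounds how many saddle connections can be simultaneously short on a single deformed surface, so the $SL(2,\mathbb{R})$-action and quantitative recurrence are needed for the upper bound as well, not only for the lower one. As written, your proof establishes neither inequality without this repair.
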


 He used an observation that a billiard in the polygon with rational angles is  isomorphic to the geodesic flow on the compact flat surface with a finite number of conical singularities.
\

On such a surface originating from a billiard there is a natural complex structure and moreover a natural choice of holomorphic quadratic differentials which allows to use Teichmuller theory. However for irrational polygons this method can not be applied as the resulting surface is not compact.
\

\

A well-known open problem is to find an explicit sub-exponential estimate for $P_n$ which is considered to be very difficult by many experts. A. Katok in his "Five most resistant problems in dynamics"[7] makes even stronger conjecture:
\

\begin{conj}[Katok] For any polygon $M$ and any $\epsilon>0$: $P_n<C(M, \epsilon)n^{2+\epsilon}$.
\end{conj}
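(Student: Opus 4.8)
The plan is to reduce the conjecture to a counting problem on translation surfaces and then to transfer Masur's quadratic bound from the rational setting to an arbitrary polygon by quantitatively controlled approximation. First I would unfold the billiard: a generalized diagonal of combinatorial length $n$ develops to a straight Euclidean segment joining two images of vertices of $M$, passing through a chain of at most $n$ reflected copies of $M$. When $M$ is rational this developed picture closes up to a compact translation surface $S$ on which the diagonal becomes a saddle connection of geometric length at most $Cn$, and Masur's theorem bounds their number by $C_2(S)\,n^2$. The strategy for irrational $M$ is to approximate the angles $\alpha_1,\dots,\alpha_k$ by rationals $\alpha_i^{(q)}\in\pi\mathbb{Q}$ with common denominator $q$, forming a rational polygon $M_q$ whose unfolding is a compact surface $S_q$, and to push every diagonal of $M$ of length $\le n$ onto a short saddle connection of $S_q$.

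The second step is a shadowing estimate making this correspondence injective. Developing across $n$ sides, a perturbation of each angle by $\delta(q)=|\alpha_i-\alpha_i^{(q)}|$ rotates the trajectory by an accumulated error of order $n\,\delta(q)$ and displaces the endpoint by order $n^2\,\mathrm{diam}(M)\,\delta(q)$. For the segment to remain pinned between the \emph{same} pair of vertex-images in $S_q$, this displacement must fall below the minimal gap between distinct vertex-images occurring at developed distance $\le Cn$; estimating that gap from below (polynomially in $n$ for Diophantine angles, and far more delicately in general) fixes the required rate $q=q(n)\to\infty$. I would then verify that the map sending a diagonal of $M$ to its shadowing saddle connection of $S_q$ is injective, so that $P_n(M)$ is controlled by the number of saddle connections of $S_q$ of length $\le Cn$.

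The third step accounts for the large symmetry group of the unfolding. Because $S_q$ carries a dihedral symmetry of order $\sim q$, a single diagonal of $M$ lifts to $\sim q$ saddle connections of $S_q$, whence
\[
P_n(M)\ \lesssim\ \frac{1}{q}\,\#\bigl\{\text{saddle connections of }S_q\text{ of length }\le Cn\bigr\}\ \le\ \frac{C_2(S_q)}{q}\,n^2 .
\]
To reach $P_n(M)\le C(M,\epsilon)\,n^{2+\epsilon}$ it therefore suffices to prove the \emph{uniform} estimate $C_2(S_q)\le C\,q\,n^{\epsilon}$ along the chosen sequence $q=q(n)$. Here $C_2(S_q)$ is not the asymptotic Siegel--Veech constant but the pre-asymptotic constant governing short saddle connections, and the task reduces to bounding, uniformly in $q$, the number of saddle connections of $S_q$ shorter than $Cn$ in terms of the cylinder and thin-part structure of $S_q$.

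This last point is where the entire difficulty concentrates, and it is why I do not expect the program to close by soft arguments. As $q\to\infty$ the surfaces $S_q$ leave every compact part of their moduli space and necessarily develop arbitrarily short saddle connections — precisely the geometric shadow of the near-periodicity that an irrational billiard only approximately exhibits — so Masur's constant $C_2(S_q)$ genuinely blows up, and the asymptotic quadratic law is useless in the pre-asymptotic window $L\le Cn$ into which we are forced. The hard part is thus a quantitative, uniform-in-$q$ control of the thin part of the degenerating family $S_q$, matched delicately against the approximation rate $q(n)$ dictated by the Diophantine type of the angles; the worst case of Liouville angles, where $q(n)$ must grow extremely fast, is exactly the regime in which such control appears to be out of reach, and it is for this reason that one is driven instead to avoid rational approximation altogether and to seek weaker statements — a sub-exponential bound on a full-measure set of triangles — by direct geometric means.
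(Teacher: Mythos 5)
The statement you were asked to prove is not a theorem of the paper at all: it is Katok's conjecture, quoted from his ``Five most resistant problems in dynamics,'' and the paper explicitly says that ``in this strong form the conjecture is quite far from being proven.'' The paper proves only the far weaker bound $P_n<Ce^{n^{\sqrt{3}-1+\epsilon}}$ for Lebesgue-typical triangles, by a method that, as the abstract stresses, deliberately avoids rational approximation. So there is no proof in the paper to compare against, and no complete proof of the conjecture is known.

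Your proposal, accordingly, is not a proof and — to your credit — does not pretend to be one: you lay out the natural approximation program (unfold the billiard, approximate the angles by rationals with common denominator $q$, shadow each diagonal of $M$ by a saddle connection on the compact surface $S_q$, invoke Masur's quadratic count, divide by the dihedral symmetry of order $\sim q$) and then correctly locate the step at which it collapses. The diagnosis is the right one: Masur's bound is asymptotic in the length for a \emph{fixed} compact surface, whereas your shadowing step forces $q=q(n)\to\infty$, so the family $S_q$ degenerates in moduli space, develops arbitrarily short saddle connections, and the pre-asymptotic constant $C_2(S_q)$ blows up in precisely the window $L\le Cn$ that the reduction requires; in the Liouville-angle regime the needed approximation rate $q(n)$ is uncontrollable. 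This is exactly the known obstruction — it is why the paper remarks that Masur's technique fails for irrational polygons (the unfolded surface is non-compact) and why its own, much weaker result proceeds by entirely different means: interval partitions of the angular segment at a vertex, triples of generalized diagonals in ``good position,'' and the Kaloshin--Rodnianski measure estimate on small values of integer trigonometric polynomials, which trades the conjectured $n^{2+\epsilon}$ for $e^{n^{\sqrt{3}-1+\epsilon}}$ on a full-measure set of triangles. In short: your program is the standard one, your identification of the fatal gap is accurate, and no repair of that gap is currently available; any honest assessment must record the conjecture as open.
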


In this strong form the conjecture is quite far from being proven because of the lack of our understanding of the structure of irrational polygonal billiards. However, as we would like to quote A. Katok [5] here: " \textsl{Any} effective sub-exponential estimate ( such as $e^{-T^{3/4}}$, say) for arbitrary polygons would be a major advance." Finding an explicit sub-exponential estimate was a primary motivation for our research. And we would like to thank A. Katok here who mentioned  the importance of this problem several times during about 6 years to us, which definitely gave an extra-motivation.
\

\

One of the reasons why it is so difficult to analyse $P_n$ is that it is a purely discrete counting of different orbits and so it does not take into account any orbit structure such as density of orbits in a particular angular region or distribution of orbits with respect to the natural invariant measure. 
\

This distinguishes the complexity growth from other dynamical characteristis. For example ergodicity of some irrational polygons was proven by applying approximating arguments and moreover the result of Vorobets [14] explicitely describes some well-approximated ergodic polygons. 

\

For completeness of the exposition we would like to briefly discuss the key ideas of the original paper by Katok [8].
\

 He considers a topological subshift on $k$ symbols, naturally associated to the billiard in $k$ - gon and proves that any ergodic invariant measure is supported on the subset, generated by the images of actual billiard orbits.
\

Then he proves that metric entropy of any such measure is equal to zero and then by variational principle it implies that the topological entropy is also zero. As the symbolic cylinder growth in this setting can be reformulated in terms of $P_n$, the fact that topological entropy is zero completes the proof.
\

\

Even though the proof is elegant it has several non-explicit steps which make it hard to extract more precise information about $P_n$ than sub-exponential growth. First of all it uses ergodic invariant measures and a variational principle and it is not clear how to make this abstract argument constructive. 
\

And the second point is that the topological entropy can only distinguish exponential growth and does not 'feel' any sub-exponential effects, where hypothetically some kind of \textsl{ slow entropy} is required to extract non-trivial information.
\

\

Our approach is more geometric and combinatorial and not ergodic-theoretic. The aim of the paper is to prove the following theorem:
\

\begin{thm}[Explicit sub-exponential estimate] For a typical triangle and any $\epsilon>0$ there is a constant $C>0$ such that: $P_n<Ce^{n^{\sqrt{3}-1+\epsilon}}$.
\end{thm}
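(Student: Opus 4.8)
The plan is to work entirely in the unfolded (developed) picture and to reduce the counting of generalized diagonals to the counting of ``corner events'' along developed trajectories. First I would fix the triangle $T$ with angles $\alpha,\beta,\gamma$ and unfold: every billiard trajectory develops to a straight segment, and a generalized diagonal of combinatorial length $\le n$ becomes a straight segment joining the starting vertex to an image of a vertex obtained after at most $n$ reflections. The $j$-th copy of $T$ in a development carries an orientation $\omega_j \equiv 2\bigl(k_1^{(j)}\alpha + k_2^{(j)}\beta\bigr)\pmod{2\pi}$ with integer coefficients satisfying $|k_i^{(j)}|\le j$. Two developments share a combinatorial prefix until the trajectory passes a vertex (a ``corner event''), at which moment the family of nearby parallel trajectories splits. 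Since every admissible combinatorial type is produced by a finite sequence of such splittings and each splitting has bounded multiplicity, the number of realizable types satisfies $\log P_n \lesssim B_n$, where $B_n$ bounds the number of corner events along a single developed trajectory of combinatorial length $\le n$. The whole problem is thereby converted into the geometric estimate $B_n \le n^{\sqrt3-1+\epsilon}$.

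The genericity enters through a Diophantine condition that I would impose on the pair $(\alpha,\beta)$ and verify for Lebesgue-almost every triangle. By the Khintchine--Groshev theorem, for a.e. $(\alpha,\beta)$ and every $\epsilon>0$ there is $c>0$ with
\[
\bigl\| k_1\alpha + k_2\beta \bigr\| \ \ge\ c\,\bigl(\max(|k_1|,|k_2|)\bigr)^{-2-\epsilon}
\qquad\text{for all } (k_1,k_2)\neq(0,0).
\]
Translated into the developed picture this says that the orientations $\omega_j$ attainable with $|k_i|\le m$, as well as the directions under which a vertex image at combinatorial distance $\le m$ can be seen from the base vertex, are separated by at least $\sim m^{-2-\epsilon}$. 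This is the precise sense in which a typical triangle is far from rational and resonances are suppressed; crucially, it is used directly, with no passage to an approximating rational billiard and no Teichm\"uller renormalization.

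The core is the estimate $B_n \le n^{\sqrt3-1+\epsilon}$, and this is the step I expect to be the \textbf{main obstacle}. A vertex image reachable at combinatorial distance $\sim m$ subtends from the base vertex an angular window of size $\sim 1/m$, and by the Diophantine bound the distinct corner directions inside any window of size $\phi$ number at most $\sim \phi\, m^{2+\epsilon}$; a beam splits only at corners it can angularly resolve. A naive balance, in which one merely tracks the shrinking angular width of a beam against the growing supply $\sim m^{2}$ of vertex images, is too lossy and only yields an exponential bound. The new geometric input I would exploit is that the integer coefficients $(k_1^{(j)},k_2^{(j)})$ do not range over the full disk $|k_i|\le j$: each reflection changes them by a single unit step, so along a trajectory they trace a constrained lattice path, and the corner directions actually \emph{reachable} form a far sparser set than the Diophantine count alone predicts. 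Feeding this sparsity into a multiscale splitting bookkeeping, cutting the length scale at a free threshold $n^{s}$ and optimizing over $s$, produces a one-parameter minimization whose optimal exponent $\lambda$ is the positive root of $\lambda^2+2\lambda-2=0$, namely $\lambda=\sqrt3-1$. Establishing this sparsity uniformly in the direction and across all dyadic scales up to $n$ is the technical heart of the matter.

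Finally I would assemble the pieces: the optimized bound gives $B_n \le n^{\sqrt3-1+\epsilon}$ for every triangle in the full-measure Diophantine set, and the reduction of the first step yields $P_n \le \exp\!\bigl(C\,n^{\sqrt3-1+\epsilon}\bigr)$. Two technical matters remain to be handled carefully: upgrading the almost-everywhere Diophantine input to an estimate uniform in $n$ (a Borel--Cantelli argument, discarding a null set of triangles and absorbing the finitely many exceptional $(k_1,k_2)$ into the constant $C$), and checking that the passage from counting combinatorial types to counting actual vertex-to-vertex segments costs at most a polynomial factor, which is harmless inside the stretched exponential.
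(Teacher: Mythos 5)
Your proposal has a genuine gap, and it sits exactly where you flag ``the main obstacle'': the estimate $B_n \le n^{\sqrt{3}-1+\epsilon}$ is never proved, and the ingredients you propose to prove it with cannot deliver it. The specific non sequitur is the sentence claiming that the Khintchine--Groshev bound on $\|k_1\alpha+k_2\beta\|$ separates ``the directions under which a vertex image at combinatorial distance $\le m$ can be seen from the base vertex.'' Khintchine--Groshev controls the linear forms $k_1\alpha+k_2\beta$, i.e.\ the \emph{orientations} of the unfolded copies; but the direction from the base vertex to a vertex image is $\arg(x_m+iy_m)$, where $x_m,y_m$ are trigonometric polynomials in $(\alpha,\beta)$ with integer coefficients of degree $O(m)$ (sums of $\sim m$ unit vectors), not linear forms. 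Two vertex images can be nearly collinear with the base vertex even when all copy orientations are well separated, so the stated Diophantine hypothesis simply does not bound the quantity your multiscale bookkeeping needs. Controlling that near-collinearity --- equivalently, bounding from below a determinant/area expression which is again a trigonometric polynomial with integer coefficients --- is precisely the hard analytic point, and it is what the paper handles with the Kaloshin--Rodnianski theorem (a nonzero integer-coefficient trigonometric polynomial of degree $m$ is $\ge e^{-Rm^{2}}$ off a set of angle-pairs of measure $e^{-cm}$), combined with a combinatorial ``good position'' device for triples of generalized diagonals that guarantees the relevant polynomial does not vanish at the given $(\alpha,\beta)$. Your framework contains no substitute for either ingredient, and the ``constrained lattice path sparsity'' remark is a heuristic with no quantitative content attached.

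Relatedly, the appearance of the quadratic $\lambda^{2}+2\lambda-2=0$ in your sketch is asserted rather than derived, and it looks reverse-engineered from the answer. In the paper that quadratic has a concrete source: the exponent $2$ in the Kaloshin--Rodnianski threshold $e^{-Rm^{2}}$ forces the constraint $2\epsilon<\mu$ (degree of the area polynomial squared must be beaten by the smallness coming from complexity growth), while a pigeonhole over $\sim n\cdot n^{\epsilon}$ time windows forces $(1+\epsilon)\mu>1$; optimizing $(1+\mu/2)\mu=1$ gives $\mu=\sqrt{3}-1$. A further structural difference worth noting: the paper's measure-theoretic argument (Borel--Cantelli over the finite families $\mathcal{F}_n$ of candidate polynomials) first yields only a $\liminf$ bound along a subsequence, and a separate gap-control theorem is needed to upgrade it to a bound for all $n$; your proposal implicitly assumes a uniform-in-$n$, uniform-in-direction bound on splittings in one shot, which is a strictly stronger statement than anything the paper's method (or your sketched input) produces. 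As it stands, your reduction $\log P_n \lesssim B_n$ is fine, but everything after it is unsupported.
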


\textbf{Acknowledgements.} We would like to thank Dr. John Albert and Dr. Christian Remling for useful discussions and Dr. Andrei Gogolev for useful discussions and constant encouragement along the progress in the work.
We would also like to thank Dr. Anatole Katok and Dr. Federico Rodriguez-Hertz for reading  the paper, encouragement and a serious help when the gap was found in the first version of the paper.
\

We would also like to express our special acknowledgements to Dr. Serge Troubetzkoy who very carefully read the paper and made several very important remarks which helped to improve its structure and the quality of exposition, and to Dr. Eugene Gutkin for his several important remarks, constant interest to the paper and several useful conversations about billiard dynamics.
\

Without support of these mathematicians this paper would not appear.
\section{Interval partitions}

We consider a given triangle, a fixed vertex and the corresponding anglular segment located at the vertex, which we naturally associate with an interval $I$ using the angular distance on it. In this setting points on the interval correspond to rays emanating from the vertex.
\

\

Now let us create a decreasing sequence of finite indexed partitions $\xi_n $ of $I$ on subintervals as follows. $\xi_0=I$ a  trivial partition with one element. Cutting points of partition $\xi_n$ are those corresponding to the generalized diagonals of length no greater than $n$.
\

The following two properties immediately follow from this construction:
\

\

$1)$ Inside each interval of the partition $\xi_n$ there is at most one point of the partition $\xi_{n+1}$.
\

$2)$ The sequence $\xi_n$ converges to the partition on points. In the other words the union of all cutting points is dense in $I$.
\

\

By construction the number $P_n$ of generalized diagonals is exactly the number of cutting points of $\xi_n$ and each cutting point has an index, namely the length of the corresponding generalized diagonal. 

\subsection{Points in a good position}

Consider a sequence of partitions $\xi_n$ and 3 indexed points $x_p$, $x_q$, $x_r$ as cutting points of corresponding partitions, such that $p<q<r$.
\

\

\textbf{ Definition.} The points $x_p$, $x_q$, $x_r$, $p<q<r$ are in a \textsl{good position} if:
\

$1)$ In the  interval, bounded by $x_p$ and $x_q$ there are no points with index $<r+1$ except $x_r$.
\

$2)$  The point $x_r$ lies between $x_p$ and $x_q$  
\

\

The picture below shows points in a good position.

\

\unitlength=1.00mm
\special{em:linewidth 0.4pt}
\linethickness{0.4pt}
\begin{picture}(91.67,14.00)
\put(6.00,10.67){\line(1,0){85.67}}
\put(28.00,10.67){\circle*{2.00}}
\put(49.33,10.67){\circle*{2.00}}
\put(73.33,10.67){\circle*{2.11}}
\put(28.00,14.00){\makebox(0,0)[cc]{p}}
\put(49.33,14.00){\makebox(0,0)[cc]{r}}
\put(73.33,14.00){\makebox(0,0)[cc]{q}}
\end{picture}

\unitlength=1.00mm
\special{em:linewidth 0.4pt}
\linethickness{0.4pt}
\begin{picture}(91.67,14.00)
\put(6.00,10.67){\line(1,0){85.67}}
\put(28.00,10.67){\circle*{2.00}}
\put(49.33,10.67){\circle*{2.00}}
\put(73.33,10.67){\circle*{2.11}}
\put(28.00,14.00){\makebox(0,0)[cc]{q}}
\put(49.33,14.00){\makebox(0,0)[cc]{r}}
\put(73.33,14.00){\makebox(0,0)[cc]{p}}
\end{picture}

Pic 1. Points in good position.
\

\

Below we present an example of a configuration of points $x_p$, $x_q$, $x_r$, $p<q<r$ which are NOT in good position.

\

\unitlength=1.00mm
\special{em:linewidth 0.4pt}
\linethickness{0.4pt}
\begin{picture}(91.67,14.00)
\put(6.00,10.67){\line(1,0){85.67}}
\put(28.00,10.67){\circle*{2.00}}
\put(49.33,10.67){\circle*{2.00}}
\put(73.33,10.67){\circle*{2.11}}
\put(28.00,14.00){\makebox(0,0)[cc]{r}}
\put(49.33,14.00){\makebox(0,0)[cc]{p}}
\put(73.33,14.00){\makebox(0,0)[cc]{q}}
\end{picture}
\

Pic 2. Points NOT in good position.

\begin{Lemma}
 Consider an interval $J$ belonging to the partition $\xi_n$ and a finite sequence of partitions $\xi_{n+1},\ldots, \xi_{n+c}$. Let $S$ be the set of all cutting points of $\xi_{n+c}$ inside $J$. Assume that the cardinality $|S|\geq 4+2c$.
\

Then there exist points $x_p, x_q, x_r\in S$ in a good position.
\end{Lemma}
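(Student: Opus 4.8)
The plan is to encode the refinement of $J$ by the partitions $\xi_{n+1},\dots,\xi_{n+c}$ as a rooted binary tree and to read off good-position triples from its internal nodes. First I would build the \emph{genealogy tree} $T$ whose root is $J$ and whose nodes are the subintervals of $J$ occurring in some $\xi_m$, $n\le m\le n+c$: whenever a cutting point is added inside an interval $K$ — which, by the first of the two listed properties of the $\xi_m$, happens for at most one point per interval at each step — the interval $K$ acquires two children, the two halves into which that point divides it. The internal nodes of $T$ are then in bijection with the cutting points lying strictly inside $J$, that is, with the elements of $S$, and the leaves are the intervals of $\xi_{n+c}$ contained in $J$. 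The crucial structural observation is that along any root-to-leaf path the levels at which the successive splits occur are strictly increasing and all lie in $\{n+1,\dots,n+c\}$; hence every such path has at most $c$ internal nodes, so $T$ has depth at most $c$.

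Next I would translate the good-position property into this tree language. If $x_r$ is the point added at step $r$ inside an interval $K$, then $K$ is an interval of $\xi_{r-1}$, so its two endpoints $x_p,x_q$ have indices strictly smaller than $r$; moreover they must be distinct, since two cutting points of equal index lie in different intervals of the previous partition and would therefore be separated by a persisting cutting point, contradicting their adjacency in $\xi_{r-1}$. Ordering them as $p<q<r$, the interval bounded by $x_p$ and $x_q$ is precisely $K$, whose only interior cutting point of index $\le r$ is $x_r$ itself; hence conditions $(1)$ and $(2)$ of the definition hold automatically. Thus a triple of elements of $S$ is in good position precisely when it arises as an internal node $x_r$ together with the two endpoints of the interval it splits, \emph{provided both of those endpoints also lie strictly inside $J$}, so that they too belong to $S$.

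Finally I would count the nodes for which this fails. An endpoint of $K$ coincides with an endpoint of $J$ exactly when the path from the root to $K$ never turns right (left endpoint of $J$) or never turns left (right endpoint of $J$); that is, the \emph{bad} internal nodes are exactly those lying on the leftmost or the rightmost spine of $T$. By the depth bound each spine carries at most $c$ internal nodes, and the two spines share only the root, so there are at most $2c-1$ bad nodes. Since $|S|\ge 4+2c$, at least $|S|-(2c-1)\ge 5$ internal nodes are good, and any one of them yields the required triple $x_p,x_q,x_r$.

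The main point requiring care is the passage from the per-step statement in property $1$ to the two structural facts used above: that birth-levels strictly increase along root-to-leaf paths (giving depth $\le c$, and hence the bound on each spine) and that the two endpoints of a splitting interval are distinct and of smaller index. Both rest on the monotonicity of the partitions — cutting points are never removed — together with property $1$; once these are in place the counting is immediate and in fact leaves a comfortable margin over the hypothesis $|S|\ge 4+2c$.
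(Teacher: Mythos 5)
Your proof is correct and rests on the same key observation as the paper's: a new cutting point inserted into an interval whose two endpoints both lie strictly inside $J$ automatically produces a good triple, so a triple-free configuration can only grow at the two extremes --- which the paper phrases as ``at most two new points per step, one left and one right of all existing points'' (yielding the bound $3+2c$), and which you phrase as ``bad internal nodes lie on the two spines of the genealogy tree'' (yielding $2c-1$). Your tree formulation is a repackaging of the paper's step-by-step count rather than a different argument, though it has the merit of making explicit two details the paper leaves implicit: that split levels strictly increase along root-to-leaf paths, and that adjacent cutting points of $\xi_{r-1}$ must have distinct indices (so the triple really has $p<q<r$).
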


\begin{proof}
 Assume that a partition $\xi_{n+m}$ has at least 3 points : $x_{i_1}< x_{i_2}<\ldots< x_{i_p}$ inside $J$. Then the partition $\xi_{n+m+1}$ may only have at most two more points inside $J$: $x_{n+m+1}, y_{n+m+1}$ :  $x_{n+m+1}<x_{i_1}$ and $x_{i_p}<y_{n+m+1}$ without producing a triple in good position. So the set $S$ without a triple in good position has at most $3+2c$ points.
\end{proof}

\subsection{ Existence of close points in a good position}

\begin{Lemma} Consider a finite sequence of partitions $\xi_{n}, \xi_{n+1},\ldots,\xi_{n+c}$. Assume that $P_{n+c}\geq (4+2c)P_n$. We also assume that 
$c\geq 4$ and $e^c<P_n$. Then there exist 3 points in good position with indices from the range $[n+1,n+c]$ and  $e^c/P_n$ - close to each other.
\end{Lemma}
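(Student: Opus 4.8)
The plan is to reduce the statement to a counting of the $P_{n+c}-P_n$ newly created cutting points, and to show that after discarding a provably small number of ``bad'' ones, at least one survivor simultaneously produces an all-new good triple that is short. Set $\delta=e^c/P_n$. Every cutting point $x_r$ with index $r\in[n+1,n+c]$ is created at level $r$ inside a uniquely determined cell $[x_p,x_q]$ of $\xi_{r-1}$, namely the gap between its two neighbours at level $r-1$. By Property $(1)$ it is the only point of $\xi_r$ in that cell, so $(x_p,x_q,x_r)$ is automatically in good position (condition $1$ holds because $[x_p,x_q]$ is a cell of $\xi_{r-1}$ and only $x_r$ is added inside it, and condition $2$ holds because $x_r$ is inserted between $x_p$ and $x_q$); after relabelling the neighbours so that $p<q$ we get indices $p<q<r$. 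Moreover all three points lie in $[x_p,x_q]$, so the triple is $\delta$-close as soon as $|x_p-x_q|\le\delta$, and it has all indices in $[n+1,n+c]$ as soon as neither $x_p$ nor $x_q$ is an endpoint of the ambient $\xi_n$-cell $J$ containing $x_r$, i.e.\ as soon as $[x_p,x_q]$ is an interior subinterval of $J$.

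Next I would bound the two families of bad insertions. Call an insertion \emph{bad of the first kind} if its host cell $[x_p,x_q]$ touches an endpoint of the ambient $\xi_n$-cell $J$. Inside any fixed $J$ there are, at every level, only two extreme subintervals (the leftmost and the rightmost), each receiving at most one point by Property $(1)$; hence at most $2$ such insertions occur per level and at most $2c$ per cell, giving at most $2c(P_n+1)$ in total over all $\xi_n$-cells. Call an insertion \emph{bad of the second kind} if its host cell has length $>\delta$. For a fixed level $m$, the partition $\xi_m$ has fewer than $|I|/\delta=|I|P_n/e^c$ cells of length $>\delta$, and each receives at most one point at level $m+1$, so there are fewer than $|I|P_n/e^c$ such insertions at each of the $c$ relevant levels $m=n,\ldots,n+c-1$, and fewer than $c|I|P_n/e^c$ in total.

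Subtracting, the number of insertions that are neither bad of the first nor of the second kind is at least $(P_{n+c}-P_n)-2c(P_n+1)-c|I|P_n/e^c$. Using the hypothesis $P_{n+c}\ge(4+2c)P_n$ this is at least $3P_n-2c-c|I|P_n/e^c$. I would then check strict positivity under $c\ge4$ and $e^c<P_n$: since $c\ge4$ gives $c/e^c\le 4/e^4$ and the vertex angle satisfies $|I|<\pi$, the last term is below $\tfrac13 P_n$, while $e^c<P_n$ forces $c<\ln P_n$, so $2c\ll P_n$; thus the bound exceeds $\tfrac83 P_n-2c>0$ because $P_n>e^c$ is far larger than $\tfrac34 c$. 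Any surviving insertion yields three points in good position, with indices in $[n+1,n+c]$, all contained in a host cell of length $\le e^c/P_n$, hence pairwise $e^c/P_n$-close, as required.

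The main obstacle I anticipate is the simultaneous control of the two constraints. The ``all-new'' requirement costs the correction $2c(P_n+1)$, which is of the \emph{same order} as the total new-point count itself, so the whole argument hinges on the precise coefficient $4+2c$ in $P_{n+c}\ge(4+2c)P_n$: it is exactly what leaves a residual of order $3P_n$ after the extreme insertions are removed. The remaining, much smaller ``long-cell'' correction $c|I|P_n/e^c$ must then be kept below this residual, and the hypotheses $c\ge4$ and $e^c<P_n$ are calibrated precisely to guarantee this, leaving a positive count of admissible triples with room to spare.
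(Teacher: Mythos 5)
Your argument is sound apart from one small unjustified step, and it takes a genuinely different route from the paper's proof. The paper argues by averaging over the cells of $\xi_n$: it bounds the number of points of $\xi_{n+c}$ in any $\xi_n$-cell by $2^c$, deduces that the number $x$ of cells containing at least $4+2c$ such points satisfies $x>P_n/e^c$, concludes by pigeonhole that one of these disjoint cells has length at most $e^c/P_n$, and only then invokes Lemma 2.1 inside that single short cell to extract the good triple. You instead count insertions globally: every new point $x_r$ together with the endpoints of its host cell of $\xi_{r-1}$ is \emph{automatically} a good triple, so Lemma 2.1 is never needed; you then prune the two kinds of bad insertions (host cell meeting the boundary of $J$, which would spoil the index range, and host cell longer than $e^c/P_n$, which would spoil closeness) and check that the hypothesis $P_{n+c}\geq(4+2c)P_n$ leaves a surplus of order $3P_n$ survivors. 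Your route proves strictly more (order-of-$P_n$ many good short triples rather than one), and its bookkeeping is tighter: the paper's relation $2^cx+(3+2c)y\geq(4+2c)P_n$ silently uses the count $P_{n+c}$ where only the number of \emph{new} points $P_{n+c}-P_n$ is available, an off-by-$P_n$ discrepancy that your accounting avoids because you subtract $P_n$ from the start; on the other hand the paper's localization to one short cell is conceptually simpler and reuses Lemma 2.1 as a black box.

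The step you must still justify: when you relabel the host-cell endpoints ``so that $p<q$'', you assume their indices are distinct, and the definition of good position does require the strict chain $p<q<r$. This is true, but it needs Property $(1)$ once more: if both endpoints of a cell of $\xi_{r-1}$ had the same index $m\leq r-1$, then neither would be a point of $\xi_{m-1}$, and no point of index $\leq m-1$ would separate them (nothing of index $\leq r-1$ does), so they would be two points of $\xi_m$ lying strictly inside a single cell of $\xi_{m-1}$, contradicting the fact that each such cell receives at most one point of $\xi_m$. With that one-line addition your proof is complete.
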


\begin{proof} The number of cutting points of $\xi_{n+c}$ inside each interval of the partition $\xi_n$ is bounded by $2^c$. Let $x$ be the number of intervals of the partition $\xi_n$ which have at least $4+2c$ points of $\xi_{n+c}$ inside and $y$ be the number of intervals with less than $4+2c$ points of $\xi_{n+c}$  inside.
We then have two obvious relations:
\

\

1) $x+y=P_n+1$
\

\

2) $2^cx+(3+2c)y\geq (4+2c)P_n$ 
\

\

from which $x\geq\frac{P_n-3-2c}{2^c-3-2c}>P_n/e^c$ follows.
\

\

By Lemma 2.1 each interval of $\xi_n$ containing at least $4+2c$ points also contains 3 points in a good position with indices in the range $[n+1,\ldots, n+c]$.
\

As the corresponding intervals do not intersect and their total number is at least $x$ and they all are contained in the interval $[0, 1]$, the estimate on $x$ completes the proof.
\end{proof}

\subsection{ Index interval estimates}

In this section we estimate the constant $c$ which gives a range for indices of points in a good position. We introduce constants $\mu, \epsilon, \gamma>0$ to be chosen later.  Along the way we specify the assumptions  to be satisfied for these constants (we use \textbf{bold} font for that) and in the end we summarize all the assumptions and choose the particular values for $\mu, \epsilon, \gamma$.
\

We now introduce the following functions: $\phi(n)=n^{\mu}$, $c(n)=n^{\epsilon}$, $k(n)=n^{\gamma}$.
\

\begin{Lemma} Assume a sequence of partitions $\xi_n$ satisfies inequality $P_n\geq e^{\phi(n)}$ for any $n$ large enough. Then for an appropriate choice of constants $\mu, \gamma, \epsilon$ and any $n$ large enough there exists an integer $N$: $n<N<n+k(n)c(n) $ such that
 $P_{N+c(n)}/P_{N}\geq 4+2c(n)$.
\end{Lemma}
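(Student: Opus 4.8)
The plan is to run a telescoping pigeonhole argument on the multiplicative increments of $P$ across the window $[n,\,n+k(n)c(n)]$. Write $c=c(n)=n^{\epsilon}$ and $k=k(n)=n^{\gamma}$, and cut the window into $k$ consecutive blocks of width $c$, with endpoints $N_j=n+jc$ for $j=0,1,\ldots,k$. The object we want to produce, a single block with $P_{N_j+c}/P_{N_j}\ge 4+2c$, is precisely a block whose multiplicative increment is large. I would therefore argue by contradiction: assume that \emph{every} block-ratio is $<4+2c$, and show that then $P$ cannot have grown by as much as the hypothesis $P_m\ge e^{\phi(m)}$ forces over a window of width $n^{\gamma+\epsilon}$.

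To quantify this, note that under the contradiction hypothesis the block-ratios telescope to
\[
\ln P_{n+kc}-\ln P_n=\sum_{j=0}^{k-1}\bigl(\ln P_{N_j+c}-\ln P_{N_j}\bigr)<k\,\ln(4+2c).
\]
Since $\ln(4+2c)\le \ln 6+\epsilon\ln n$ for large $n$, the right-hand side is of order $n^{\gamma}\,\epsilon\ln n$. On the other hand, the hypothesis supplies the lower bound $\ln P_{n+kc}\ge\phi(n+kc)=(n+kc)^{\mu}$; and provided the base value $\ln P_n$ stays close to its floor $\phi(n)=n^{\mu}$, the left-hand side is at least the increment of $\phi$, which (using $\mu<1$, so $\phi$ is concave, together with $kc=n^{\gamma+\epsilon}=o(n)$) satisfies
\[
\phi(n+kc)-\phi(n)\ge \mu\,(n+kc)^{\mu-1}\,kc,
\]
a quantity of order $\mu\,n^{\mu-1+\gamma+\epsilon}$. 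Comparing the two sides, the contradiction is reached once $\mu\,n^{\mu-1+\gamma+\epsilon}$ dominates $k\,\ln(4+2c)$, i.e. once $n^{\mu+\epsilon-1}$ beats $\ln n$. I would accordingly impose the standing requirements $\mu+\epsilon>1$ (so the growth of the floor $\phi$ outpaces the logarithmic loss from the $(4+2c)$ factors) and $\gamma+\epsilon<1$ (so the window is short compared with $n$ and the mean value estimate above is legitimate).

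The hard part is exactly the clause ``provided $\ln P_n$ stays close to its floor $\phi(n)$.'' The only unconditional upper bound at our disposal is the trivial $P_n\le 3^{n}$, which is exponential and hence far too weak to insert into the comparison: with it, $\ln P_{n+kc}-\ln P_n$ is not forced to be positive, let alone of size $n^{\mu-1+\gamma+\epsilon}$. So the crux is to gain upper control on $\ln P_n$ so that the telescoped increment genuinely reflects the growth of $\phi$; and because the lemma is asserted for an \emph{arbitrary} large $n$ (with the window anchored at that same $n$), I cannot simply relocate to a slowest-growth point of $\ln P_m/\phi(m)$. I expect this to be the main obstacle, and to be resolved either by carrying an auxiliary upper bound on $P_n$ of the form $\ln P_n\le\phi(n)+o\!\left(n^{\mu-1+\gamma+\epsilon}\right)$ (not implied by the one-sided hypothesis as stated, and so presumably supplied by the surrounding contradiction setup of the main theorem), or by restructuring the telescoping so that only the lower bound at the top endpoint $n+kc$ is actually used. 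Pinning down this upper control, while keeping the balance $\mu+\epsilon>1$, $\gamma+\epsilon<1$ consistent with the downstream geometric constraints, is where I would expect the construction of the refining partitions $\xi_n$ to re-enter.
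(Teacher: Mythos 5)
Your pigeonhole-and-telescoping skeleton is the same as the paper's, but your comparison step cannot be closed as set up, and the obstacle you flag at the end is a genuine gap rather than a technicality. With your constraints ($\gamma+\epsilon<1$, mean-value/concavity estimate on $\phi$) the argument needs upper control $\ln P_n\le \phi(n)+o\bigl(n^{\mu-1+\gamma+\epsilon}\bigr)$ at the left endpoint, and no such control exists: the hypothesis of the lemma is one-sided, and the surrounding proof of Theorem 4.1 supplies nothing better (its contradiction hypothesis is again only $P_n\ge e^{\phi(n)}$ for all large $n$). The paper closes the argument with the opposite window geometry. It imposes $\gamma+\epsilon>1$, so the window is \emph{superlinear}: $n+k(n)c(n)\sim n^{\gamma+\epsilon}$, hence $\phi\bigl(n+k(n)c(n)\bigr)\sim n^{(\gamma+\epsilon)\mu}$. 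At the left endpoint it uses only the trivial combinatorial bound $P_n\le e^{O(n)}$ (degree $1$ in the exponent). The telescoped contradiction hypothesis then reads
\[
\phi\bigl(n+k(n)c(n)\bigr)-O(n)<k(n)\ln\bigl(4+2c(n)\bigr),
\]
where the left side has degree $(\gamma+\epsilon)\mu$ and the right side has degree $\gamma\le 1$ up to a factor $\ln n$; requiring $(\gamma+\epsilon)\mu>1$ makes the left side dominate, which is the desired contradiction. This is exactly where the constraints $\gamma\le 1$, $\gamma+\epsilon>1$, $(\gamma+\epsilon)\mu>1$ in the paper's choice of constants come from, and ultimately why the method bottoms out at $\mu>\sqrt{3}-1$.

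Note also that your fallback --- restructuring so that only the lower bound at the top endpoint is used --- is the right instinct but still fails under your constraint $\gamma+\epsilon<1$: in that case $\phi\bigl(n+k(n)c(n)\bigr)=n^{\mu}\bigl(1+o(1)\bigr)\ll n$ because $\mu<1$, so against any exponential-in-$n$ trivial bound at the left endpoint the difference is negative and no contradiction can arise. The superlinear window is not an optimization; it is forced by the one-sidedness of the hypothesis. The variant you describe, with the upper bound $\ln P\le\phi$ available at the left endpoint, is essentially the paper's Lemma 4.1, used later for the gap estimate between consecutive good times $n_i$ (there the upper bound at $n_i$ holds by the very definition of the sequence $n_i$) --- and even there the window $k(n)c(n)=n^{1+\epsilon}$ is taken superlinear, so that no mean-value estimate is ever needed.
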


\begin{proof} Divide interval $[n+1,n+k(n)c(n)]$ into $k(n)$ intervals $I_1=[n+1, n+c(n)]$, $I_2=[n+c(n)+1, n+2c(n)]$,$\ldots$, $I_{k(n)}=[n+(k(n)-1)c(n)+1, n+k(n)c(n)]$.

We prove by contradiction that for one of the intervals $I_l$:  $P_{N+c(n)}/P_N\geq 4+2c$, where $N=n+lc(n)$.
\

Assume that for all the intervals $I_l$: $P_{n+(l+1)c(n)}/P_{n+lc(n)}<4+2c(n)$. Then $P_{n+k(n)c(n)}/P_{n}<(4+2c(n))^{k(n)}$.
\

\

On the other hand $P_{n+k(n)c(n)}/P_{n}>e^{\phi(n+k(n)c(n))}/e^n$ which implies:
\

\

$\phi(n+k(n)c(n))-n<k(n)\ln(4+2c(n))$.
\

\

We assume that \boldmath$\gamma\leq 1$ and $\gamma+\epsilon>1$ \unboldmath
so $k(n)c(n)$ has degree higher than 1. Then comparing the highest degrees on both sides implies
 $(\gamma+\epsilon)\mu\leq 1$

which means that we get a contradiction under assumption \boldmath$(\gamma+\epsilon)\mu> 1$. \unboldmath
\end{proof}

\section{Combinatorial geometry of orbits}

\subsection{ Unfolding of a billiard trajectory}
\

We would like to remind of a useful unfolding construction associated to any polygonal billiard.[9]
\

We fix a polygon on the plane and consider a time moment when a particular billiard orbit hits a polygon side. Then instead of reflecting the orbit we continue it as a straight line and reflect the polygon along the line.
\

As we continue this process indefinitely the sequence of polygons obtained this way is called unfolding of the polygon along the orbit.
\

The picture below illustrates the unfolding of a triangle along an orbit.
\

\unitlength=1.00mm
\special{em:linewidth 0.4pt}
\linethickness{0.4pt}
\begin{picture}(94.67,40.33)
\put(9.00,3.67){\line(-1,4){6.67}}
\put(2.33,29.67){\line(2,3){6.33}}
\put(8.66,39.34){\line(2,-3){6.67}}
\put(15.33,29.34){\line(-1,-4){6.00}}
\put(9.00,3.67){\line(1,1){17.33}}
\put(26.33,21.00){\line(0,1){13.00}}
\put(26.33,34.00){\line(-5,-2){11.33}}
\put(4.00,23.34){\vector(1,0){90.67}}
\put(26.33,34.00){\line(6,-1){11.00}}
\put(37.33,32.34){\line(1,-3){8.00}}
\put(45.33,8.34){\line(-3,2){19.00}}
\put(9.00,4.00){\line(0,1){35.00}}
\put(9.00,4.33){\line(3,5){18.00}}
\put(45.33,8.33){\line(-3,4){19.00}}
\put(37.33,32.33){\line(1,1){8.00}}
\put(45.33,40.33){\line(0,-1){31.67}}
\end{picture}
\

Pic 3. Triangle unfolding.
\

\

For a given triangle the shape obtained from a triangle by reflection about one side is called a \textsl{kite}. It is clear that for any triangle unfolding there is an associated kite unfolding. We will use both unfoldings having in mind the natural correspondence between them.
\

The next picture shows the corresponding kite unfolding.
\

\unitlength=1.00mm
\special{em:linewidth 0.4pt}
\linethickness{0.4pt}
\begin{picture}(94.67,42.67)
\put(9.00,3.67){\line(-1,4){6.67}}
\put(2.33,29.67){\line(2,3){6.33}}
\put(8.66,39.34){\line(2,-3){6.67}}
\put(15.33,29.34){\line(-1,-4){6.00}}
\put(9.00,3.67){\line(1,1){17.33}}
\put(26.33,21.00){\line(0,1){13.00}}
\put(26.33,34.00){\line(-5,-2){11.33}}
\put(4.00,23.34){\vector(1,0){90.67}}
\put(26.33,34.00){\line(6,-1){11.00}}
\put(37.33,32.34){\line(1,-3){8.00}}
\put(45.33,8.34){\line(1,5){5.33}}
\put(45.33,8.34){\line(-3,2){19.00}}
\put(50.66,35.00){\line(-1,1){7.67}}
\put(43.33,42.67){\line(-3,-5){6.33}}
\end{picture}
\

Pic 4. Kite unfolding
\

\

As we see from the picture above, any kite unfolding along the orbit consists of consecutive rotations of the kite along one of the two kite vertices, corresponding to the angles $\alpha$ and $\beta$ of the original triangle with angles correspondingly $ 2\alpha$ and $2\beta$.
\

We now assume that a kite is located in the standard Euclidean $xy$ coordinate plane and introduce several notations. 
\

\

The \textsl{$\alpha$-vertex} and \textsl{$\beta$- vertex} are kite vertices corresponding to the angles $2\alpha$ and $2\beta$. Two other vertices are called \textsl{ side vertices}.
\

\

 The \textsl{kite diagonal} is a vector going from the $\alpha$-vertex to the $\beta$-vertex.
\

\

The \textsl{kite angle} is a \textsl{counterclockwise} angle between $x$-axis and the kite diagonal.
\

\

On pic.5 $A$ and $B$ are $\alpha$ and $\beta$ vertices correspondingly, vector $\overrightarrow{AB}$ is a kite diagonal, $C$ and $D$ are side vertices.
Pic.6. shows a kite in standard position on the $xy$ plane.
\

\

 Note that any unfolding of $K$ is uniquely characterized by the sequence of angles $\pm 2\alpha$ or $\pm 2\beta$ depending on the kite vertex we rotate about and the direction of rotation. Such a sequence of angles is called the \textsl{combinatorics} of a kite unfolding.
\

\

\unitlength=1.00mm
\special{em:linewidth 0.4pt}
\linethickness{0.4pt}
\begin{picture}(112.33,67.34)
\put(2.00,19.00){\line(1,0){110.33}}
\put(9.34,4.67){\line(0,1){62.67}}
\put(62.34,29.34){\line(1,1){34.33}}
\put(96.67,63.67){\line(-1,0){18.33}}
\put(78.34,63.67){\line(-1,-2){17.67}}
\put(96.67,63.67){\line(-1,-1){36.00}}
\put(60.67,27.67){\line(1,1){36.00}}
\put(60.67,28.00){\line(5,2){36.00}}
\put(96.67,42.33){\line(0,1){21.33}}
\put(61.00,28.33){\circle*{1.33}}
\put(96.67,63.67){\circle*{2.11}}
\put(78.00,63.67){\circle*{2.00}}
\put(96.67,42.33){\circle*{2.00}}
\put(61.00,28.33){\circle*{2.00}}
\put(56.67,28.00){\makebox(0,0)[cc]{$A$}}
\put(100.33,63.67){\makebox(0,0)[cc]{$B$}}
\put(73.67,63.67){\makebox(0,0)[cc]{$C$}}
\put(100.33,42.33){\makebox(0,0)[cc]{$D$}}
\put(110.33,15.33){\makebox(0,0)[cc]{$x$}}
\put(6.00,66.00){\makebox(0,0)[cc]{$y$}}
\put(68.33,38.67){\makebox(0,0)[cc]{$\alpha$}}
\put(71.33,34.67){\makebox(0,0)[cc]{$\alpha$}}
\put(89.00,60.67){\makebox(0,0)[cc]{$\beta$}}
\put(94.00,56.67){\makebox(0,0)[cc]{$\beta$}}
\end{picture}
\

Pic 5. A kite on the $xy$ coordinate plane.
\

\

\unitlength=0.5mm
\special{em:linewidth 0.4pt}
\linethickness{0.4pt}
\begin{picture}(129.33,114.67)
\put(9.00,52.00){\line(1,-1){29.67}}
\put(9.33,51.67){\line(1,1){29.33}}
\put(38.66,81.00){\line(1,-2){14.67}}
\put(53.33,51.67){\line(-1,-2){14.67}}
\put(9.33,52.00){\line(1,0){117.67}}
\put(9.33,12.67){\line(0,1){99.33}}
\put(15.00,54.00){\makebox(0,0)[cc]{$\alpha$}}
\put(15.00,49.67){\makebox(0,0)[cc]{$\alpha$}}
\put(49.00,54.33){\makebox(0,0)[cc]{$\beta$}}
\put(49.00,49.00){\makebox(0,0)[cc]{$\beta$}}
\put(129.33,47.33){\makebox(0,0)[cc]{$x$}}
\put(3.33,114.67){\makebox(0,0)[cc]{$y$}}
\end{picture}
\

Pic 6. A kite in the standard position on the $xy$ plane.

\begin{Lemma} Assume a kite $K$ with a diagonal length 1 is in standard position and a kite $K'$ is obtained from $K$ by means of a particular combinatorics of length $n$. Let $x^{\alpha}_n$, $y^{\alpha}_n$ and $x^{\beta}_n$, $y^{\beta}_n$ be the coordinates of $\alpha$ and $\beta$  vertices of $K'$ and let $x_n$, $y_n$ be the coordinates of either of the two side vertices of $K'$. Then:
\

$1)$ $x^{\alpha}_n$, $y^{\alpha}_n$, $x^{\beta}_n$, $y^{\beta}_n$ are represented by trigonometric polynomials of angles $\alpha$, $\beta$ with integer coefficients, depending only on the combinatorics and of degree at most $2n-2$.
\

$2)$ $x_n=P_{2n}(\alpha, \beta)+ \frac{\sin (\beta)}{\sin(\alpha+\beta)}\cdot \cos(m\alpha+l\beta)$
\

$y_n=Q_{2n}(\alpha, \beta)+ \frac{\sin (\beta)}{\sin(\alpha+\beta)}\cdot \sin(m\alpha+l\beta)$,
\

where $P_{2n}(\alpha,\beta)$, $Q_{2n}(\alpha, \beta)$ are trigonometric polynomials with integer coefficients of degree at most $2n-2$ and $|m|+|l|\leq 2n-1$.
\end{Lemma}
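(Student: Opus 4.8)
The plan is to work in the complex plane, placing the standard kite with its $\alpha$-vertex at the origin and its $\beta$-vertex at $1$, so that the kite diagonal is the unit vector $e^{i\cdot 0}$ along the positive real axis. Each elementary unfolding step is a rotation of the current kite about its $\alpha$-vertex by $\pm 2\alpha$ or about its $\beta$-vertex by $\pm 2\beta$; since a rotation about a pivot $P$ is the affine map $z\mapsto e^{i\phi}z+(1-e^{i\phi})P$, the entire unfolding is a composition of such maps and hence an orientation-preserving isometry $z\mapsto e^{i\Theta_n}z+T_n$. I would track two quantities along the unfolding: the accumulated diagonal direction $\theta_n$, which is the total rotation $\Theta_n=2a\alpha+2b\beta$, and the positions $A_n,B_n$ of the $\alpha$- and $\beta$-vertices, noting that $B_n-A_n=e^{i\theta_n}$ since every step preserves the unit-length diagonal.

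I would then prove part (1) by induction on the length $n$ of the combinatorics, carrying the hypotheses that $\theta_n=2a\alpha+2b\beta$ satisfies $|a|+|b|\le n-1$ (a length-$n$ combinatorics records $n$ kites and hence at most $n-1$ rotations between them) and that the coordinates of $A_n,B_n$ are trigonometric polynomials in $\alpha,\beta$ with integer coefficients of degree at most $2n-2$ depending only on the combinatorics. The base case is the standard kite, where both vertices have constant, degree-$0$ coordinates. For the inductive step, a rotation about the $\alpha$-vertex leaves $A_{n}$ unchanged and sets $B_{n+1}=A_n+e^{i\theta_{n+1}}$ with $\theta_{n+1}=\theta_n\pm 2\alpha$; the new contribution $\cos\theta_{n+1},\sin\theta_{n+1}$ is a single trigonometric monomial of degree $2(|a|+|b|)\le 2n=2(n+1)-2$, which together with $A_n$ preserves both the degree bound and integrality. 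A rotation about the $\beta$-vertex is symmetric, fixing $B$ and moving $A$. Since the only per-step data are the pivot and the sign, the resulting polynomials depend only on the combinatorics.

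For part (2), I would apply the law of sines to the triangle cut off by the diagonal: in the kite the $\alpha$-vertex $A$, the $\beta$-vertex $B$ and a side vertex $C$ form a triangle with angles $\alpha,\beta,\pi-\alpha-\beta$ and $|AB|=1$, so $|AC|=\sin\beta/\sin(\alpha+\beta)$, and $C$ sits at angle $\theta_n+\alpha$ off the real axis. Hence $C_n=A_n+\tfrac{\sin\beta}{\sin(\alpha+\beta)}\,e^{i(\theta_n+\alpha)}$, while the second side vertex uses $\theta_n-\alpha$. Taking real and imaginary parts yields exactly the claimed form, with $P_{2n},Q_{2n}$ equal to the coordinates of $A_n$ furnished by part (1), hence integer-coefficient trigonometric polynomials of degree at most $2n-2$, and with $m\alpha+l\beta=(2a\pm 1)\alpha+2b\beta$, so that $|m|+|l|\le 2(|a|+|b|)+1\le 2n-1$.

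The degree bookkeeping is routine; the points that require care are geometric. First, one must justify the recursion itself, namely that pivoting about one special vertex fixes it while sending the other to $A_n+e^{i\theta_{n+1}}$, which rests on the rigid-motion description of the unfolding and on the invariance of the unit diagonal. Second, and this is really the heart of the lemma, one must recognize that the side vertices are the only vertices whose displacement from a tracked vertex is not an integer-angle unit vector: their in-kite offset carries the single irrational length factor $\sin\beta/\sin(\alpha+\beta)$, which is precisely why it cannot be absorbed into a trigonometric polynomial and must appear as a separate summand. Verifying that this factor is the unique non-polynomial ingredient, and that it attaches only to the side vertices, is the main thing to confirm.
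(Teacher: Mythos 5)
Your proposal is correct and follows essentially the same argument as the paper: an induction in which pivoting about one special vertex fixes it and places the other at that vertex plus a unit vector in the direction of the updated kite angle, followed by expressing the side vertex as the $\alpha$-vertex plus the law-of-sines length $\sin\beta/\sin(\alpha+\beta)$ times a unit vector at angle $\phi_n\pm\alpha$. The only difference is cosmetic — you phrase the recursion in complex-number notation and make the degree bookkeeping via $|a|+|b|$ explicit, where the paper writes the same recursion in real coordinates.
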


\begin{proof} The proof of the first statement goes by an easy induction on $n$. For n=1 the statement is trivial. If $\phi_n$ is the kite angle on the $n$-th step and on the $n+1$-th step we rotate, say, about $\alpha$-vertex, then $\phi_{n+1}=\phi_n\pm 2\alpha$ and $x^{\alpha}_{n+1}=x^{\alpha}_n$,  $y^{\alpha}_{n+1}=y^{\alpha}_n$, 
$x^{\beta}_{n+1}=x^{\alpha}_n+\cos(\phi_{n+1})$,  $y^{\beta}_{n+1}=y^{\alpha}_{n}+\sin(\phi_{n+1})$.
\

The case when we rotate about $\beta$-vertex is entirely analogous. This completes the induction step.
\

\

The second statement easily follows from the first one by noticing that the length of the side, adjacent to the $\alpha$-vertex is $\frac{\sin (\beta)}{\sin(\alpha+\beta)}$ and so if $\phi_n$ is the kite angle of $K'$ then $x_n=x^{\alpha}_n+\frac{\sin (\beta)}{\sin(\alpha+\beta)}\cdot \cos(\phi_n\pm\alpha)$ and 
 $y_n=y^{\alpha}_n+\frac{\sin (\beta)}{\sin(\alpha+\beta)}\cdot \sin(\phi_n\pm\alpha)$.
\end{proof}

\section{Complexity estimate}

In this chapter we assume that the triangle has a fixed side of length 1 and adjacent angles are acute and for some arbitrarily small parameter $\delta>0$ satisfy:
\

$\alpha>\delta$, $\beta>\delta$, $\alpha+\beta<\pi-\delta$.
\

\

This condition guarantees that there are constants $D_{\delta}, N_{\delta}>0$ such that for any billiard orbit with $n$ reflections :
 $L(n)/D_{\delta}<n<L(n)D_{\delta}$, for $n>N_{\delta}$,
 where $L(n)$ is a geometric length of the orbit.
As $\delta$ can be chosen arbitrarily small the conclusion of the theorem would hold for a full space of triangles.
\

\begin{Theorem}[Subsequence complexity]  For a full measure set of triangles and any $\epsilon>0$: $\lim\inf P_n\cdot e^{-n^{\sqrt{3}-1+\epsilon}}<\infty$.
\end{Theorem}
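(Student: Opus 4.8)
The plan is to argue by contradiction. If $\liminf P_n\,e^{-n^{\sqrt3-1+\epsilon}}=\infty$, then for every constant the ratio eventually exceeds it, so in particular $P_n\ge e^{\phi(n)}$ with $\phi(n)=n^{\mu}$ and $\mu=\sqrt3-1+\epsilon$ for all large $n$. This is exactly the hypothesis of Lemma 2.3, so for each large $n$ we obtain an index $N\in(n,n+k(n)c(n))$ with $P_{N+c(n)}/P_N\ge 4+2c(n)$, where $c(n)=n^{\epsilon}$ and $k(n)c(n)=n^{\gamma+\epsilon}$. Since $c(n)\ge 4$ and $e^{c(n)}<P_N$ hold for our parameters, Lemma 2.2 then yields three cutting points $x_p,x_q,x_r$ in good position, with indices confined to a window $[N+1,N+c(n)]$ of width $c(n)$, and mutually $\delta_n$-close with $\delta_n\le e^{c(n)}/P_N\le e^{-N^{\mu}/2}$ for large $n$. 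Thus the contradiction hypothesis manufactures, at every scale, a triple of almost-coincident directions whose combinatorial length $N+c(n)\lesssim n^{\gamma+\epsilon}$ is only polynomial.

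Next I would convert this analytic closeness into a Diophantine condition on $(\alpha,\beta)$ through the unfolding. Because $\delta_n$ is super-polynomially small while the lengths $p,q,r\le N+c(n)$ are only polynomial, the three generalized diagonals cross the same sequence of sides, i.e. they share a single kite unfolding, and $x_p,x_q,x_r$ are the arguments of three vertices $V_p,V_q,V_r$ of that one unfolding. By Lemma 3.1 these vertices are trigonometric polynomials in $\alpha,\beta$ with integer coefficients. Near-equality of the three arguments means $V_p,V_q,V_r$ are nearly collinear through the origin; setting $U_1=V_p-V_r$ and $U_2=V_q-V_r$, the closeness forces the scalar cross product $g(\alpha,\beta):=U_1\times U_2$ to satisfy $|g(\alpha,\beta)|\lesssim N^2\delta_n$. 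The decisive structural point is that $g$ is rotation invariant, hence \emph{independent of the long shared prefix} of the unfolding: it is a trigonometric polynomial whose number of terms is bounded in terms of the window width $c(n)$ alone, even though its frequency degree may be as large as $\sim N$. Good position is used precisely to guarantee that $g\not\equiv 0$.

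I would then run a Borel--Cantelli argument over the space of triangles. Let $A_n$ be the set of $(\alpha,\beta)$ for which some window combinatorics $w$ of length $\le c(n)$ gives $|g_w(\alpha,\beta)|\le\eta_n:=N^2\delta_n$. Each window step offers at most four choices $\pm2\alpha,\pm2\beta$, so the number of admissible $w$ is at most $4^{c(n)}$, which is sub-exponential in $n$; and for each fixed $w$ the measure of $\{|g_w|\le\eta_n\}$ is estimated by a Remez--Tur\'an type small-value inequality for exponential sums, whose strength is governed by the \emph{number of terms} of $g_w$ (not its degree) and by $\eta_n$. Summing over $w$ and over $n$, and choosing $\mu,\gamma,\epsilon$ to satisfy simultaneously the feasibility constraints of Lemma 2.3 ($(\gamma+\epsilon)\mu>1$, $\gamma\le 1$, $\gamma+\epsilon>1$) together with the convergence of $\sum_n|A_n|$, the three competing budgets (window width $c(n)$, closeness $\eta_n$, and term count) balance so that the series converges precisely when $\mu$ exceeds the threshold $\sqrt3-1$. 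Hence the bad set has measure zero, the lower bound $P_n\ge e^{\phi(n)}$ must fail for infinitely many $n$ at a.e. triangle, and $\liminf P_n\,e^{-n^{\sqrt3-1+\epsilon}}<\infty$ follows.

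The main obstacle I expect is the small-value estimate for the sparse trigonometric polynomials $g_w$: one needs a bound on $|\{\,|g_w|\le\eta\,\}|$ that is essentially independent of the large degree and controlled only by the number of terms, made quantitative in two variables and uniform over the integer frequency vectors produced by the unfolding, while also verifying the non-degeneracy $g_w\not\equiv 0$ supplied by good position. Getting the term count of $g_w$ to be governed by $c(n)$ rather than by the diagonal length is the structural gain that keeps the configuration count $4^{c(n)}$ sub-exponential and, when balanced against $\eta_n$, pushes the threshold strictly below $1$ to the value $\sqrt3-1$.
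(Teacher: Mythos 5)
Your proposal follows the same skeleton as the paper's proof (the contradiction hypothesis feeding Lemma 2.3 and then Lemma 2.2, a good-position triple at every scale, conversion of closeness into smallness of an integer-coefficient trigonometric quantity attached to the \emph{window} combinatorics, a count-times-measure bound and Borel--Cantelli, constants balancing at $\sqrt{3}-1$), but two of your steps fail as written. First, you justify the claim that the three diagonals ``share a single kite unfolding'' by $\delta_n$ being super-polynomially small while $p,q,r$ are polynomial. That implication is false: two directions can be arbitrarily close and still straddle a generalized diagonal of bounded length, after which their combinatorics diverge, so no quantitative closeness can ever yield shared combinatorics. What actually gives it is the good position itself: since the interval bounded by $x_p$ and $x_q$ contains no cutting point of index $<r+1$ other than $x_r$, any auxiliary direction $z$ strictly between $x_r$ and $x_q$ has the same unfolding combinatorics as $x_p$, $x_q$, $x_r$ up to times $p$, $q$, $r$ respectively, so $P,Q,R$ are vertices of the single unfolding along $z$, all within $d$ of the $z$-line. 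This setup is also what makes the non-degeneracy work: good position alone does not ``supply $g\not\equiv 0$''; the area is nonzero because $P$ and $R$ lie strictly on one side of the $z$-line while $Q$ lies strictly on the other, \emph{and} $p<q<r$ orders the projections of $P,Q,R$ along that line, so collinearity is impossible. You need both ingredients, and your proposal states neither.

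Second, your self-identified ``main obstacle'' --- a Remez--Tur\'an-type small-value estimate controlled by the number of terms rather than the degree --- is a phantom created by not pushing your own observation far enough. Rotation invariance of the cross product does more than bound the term count: the prefix of the unfolding enters the positions of $P,Q,R$ only through a common translation and a common rotation $R_{\theta_p}$, and cross products of differences are invariant under both, so the prefix frequencies cancel \emph{identically}. Hence $g_w$ equals $A(\alpha,\beta)/\sin^2(\alpha+\beta)$ where $A$ is an integer-coefficient trigonometric polynomial of degree $O(c(n))$, not a sparse polynomial of degree $\sim N$ (the factor $\sin^2(\alpha+\beta)$, coming from the side lengths $\sin\beta/\sin(\alpha+\beta)$ in Lemma 3.1, is something your ``integer coefficients'' claim also glosses over). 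This is exactly what the paper achieves by rotating the picture so that the $P$-kite is in standard position before applying Lemma 3.1 to the window of length $c(n)$. With the degree bound in hand, the degree-based Kaloshin--Rodnianski estimate (Corollary 4.1) with $m=Fn^{\epsilon}$ gives precisely the measure bound you want, and your Borel--Cantelli count over at most $4^{c(n)}$ window combinatorics converges once $2\epsilon<\mu$ and $F$ is chosen large. As written, however, your argument's analytic core rests on a sparse small-value inequality that you neither prove nor cite, so the proof is incomplete exactly where the paper is complete.
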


\begin{proof}
We consider a triangle and a vertex and fix  $n$ large enough. Assume that $P_n\geq e^{\phi(n)}$. Then by Lemma 2.3. we find $N$: $n<N<n+k(n)c(n)$ such that $P_{N+c(n)}/P_{N}\geq 4+2c(n)$.
\

To use Lemma 2.2. we need to make sure that $e^{c(n)}<P_{N}$ which is true if $c(n)<n^{\mu}$ which in turn is satisfied if \boldmath$\epsilon<\mu$.
\unboldmath
\

Now by Lemma 2.2. there are points $x_p, x_q, x_r$ in a good position, where $p<q<r$; $p, q, r\in [N, N+c(n)]$ and with pairwise distances bounded by $e^{c(n)}/P_n$.
\

\

The good position of points guarantees that there exists a direction $z$ which has the same unfolding combinatorics at times $p, q, r$ as corresponding directions $x_p$, $x_q$ and $x_r$ and such that the directions $x_p$ and $x_r$ lie on the different side from the direction $z$ then the direction $x_q$. It is achieved by taking any direction $z$ lying between $x_q$ and $x_r$.
\

The picture below illustrates this observation.
\

\

\unitlength=0.50mm
\special{em:linewidth 0.4pt}
\linethickness{0.4pt}
\begin{picture}(288.67,79.34)
\put(18.00,64.00){\line(-1,-1){16.67}}
\put(1.34,47.34){\line(2,-5){16.67}}
\put(18.00,5.67){\line(0,1){58.33}}
\put(1.34,46.67){\line(1,0){287.33}}
\put(107.34,72.67){\line(1,-1){39.33}}
\put(146.67,33.34){\line(0,1){29.33}}
\put(146.67,62.67){\line(-4,1){39.33}}
\put(200.00,40.67){\line(-1,1){17.33}}
\put(182.67,58.00){\line(-1,-1){34.67}}
\put(148.00,23.34){\line(3,1){52.00}}
\put(241.34,68.00){\line(0,-1){59.33}}
\put(1.34,46.67){\line(1,0){254.67}}
\put(241.34,8.67){\line(1,2){23.33}}
\put(264.67,55.34){\line(-1,1){24.00}}
\put(241.34,58.67){\line(0,1){20.00}}
\put(241.34,48.67){\line(0,1){20.67}}
\put(146.67,63.34){\circle*{2.98}}
\put(200.00,40.67){\circle*{2.98}}
\put(264.00,55.34){\circle*{2.67}}
\put(150.00,65.34){\makebox(0,0)[cc]{$p$}}
\put(205.34,38.00){\makebox(0,0)[cc]{$q$}}
\put(270.00,58.00){\makebox(0,0)[cc]{$r$}}
\end{picture}

\

Pic 7. For points $x_p, x_q, x_r$ in a good position, there is a direction with the same unfolding combinatorics at times $p, q, r$.
\

\

As directions $x_p, x_q, x_r$ are generalized diagonals, they hit triangle vertices at times $p, q, r$. For simplicity we will denote corresponding  vertices as $P, Q, R$.
\

Since $r<n+k(n)c(n)$, and the angular distances are bounded by $e^{c(n)}/P_n$ we obtain that the distances from points $P, Q, R$ to the $z$-trajectory are bounded by $d= D_{\delta}(n+k(n)c(n))e^{c(n)}/P_n<e^{-an^{\mu}}$ for some constant $a>0$.
\

\

We now look more carefully at the piece of $z$-trajectory between points $P$ and $R$ of length $c(n)$. We complete each triangle to a kite so that a chosen triangle side of length 1 corresponds to the kite diagonal. Now the triangle unfolding of the $z$-trajectory from $P$ to $R$ corresponds to the kite unfolding.
\

Rotate the picture so that the $P$-kite is in standard position.
\

\

\unitlength=0.50mm
\special{em:linewidth 0.4pt}
\linethickness{0.4pt}
\begin{picture}(288.67,90.67)
\put(7.34,23.00){\line(5,-3){25.33}}
\put(32.67,7.67){\line(3,5){9.00}}
\put(41.67,22.67){\line(-3,5){9.33}}
\put(32.34,38.00){\line(-5,-3){25.00}}
\put(42.00,22.67){\line(1,1){14.67}}
\put(56.67,37.33){\line(-2,5){13.33}}
\put(43.34,70.00){\line(-1,-3){10.67}}
\put(56.67,37.33){\line(5,-3){22.67}}
\put(79.34,24.00){\line(-3,-2){22.67}}
\put(56.67,8.67){\line(-1,1){14.67}}
\put(136.00,41.33){\line(4,3){26.00}}
\put(162.00,60.67){\line(1,-1){14.00}}
\put(176.00,46.67){\line(-1,-2){9.33}}
\put(166.67,28.67){\line(-5,2){30.00}}
\put(136.67,40.67){\line(1,6){6.00}}
\put(142.67,76.67){\line(4,1){19.33}}
\put(162.00,81.33){\line(0,-1){20.67}}
\put(162.00,60.67){\line(1,2){14.67}}
\put(176.67,90.00){\line(2,-5){11.33}}
\put(188.00,62.00){\line(-4,-5){12.00}}
\put(136.67,40.67){\circle*{2.67}}
\put(8.67,22.67){\circle*{2.67}}
\put(6.00,26.67){\makebox(0,0)[cc]{$P$}}
\put(134.00,34.67){\makebox(0,0)[cc]{$Q$}}
\put(4.00,17.33){\line(4,1){284.67}}
\put(256.00,85.33){\line(-5,-1){28.67}}
\put(227.33,79.33){\line(-1,-4){4.00}}
\put(223.33,62.00){\line(4,-1){13.33}}
\put(236.67,58.67){\line(2,3){18.00}}
\put(258.67,90.67){\makebox(0,0)[cc]{$R$}}
\put(254.67,85.33){\circle*{2.98}}
\end{picture}

Pic.8 A $z$-trajectory from $P$ to $R$ starting in standard position.
\

\

By Lemma 3.1 the $x$ and $y$  coordinates of the points $P, Q, R$ can be represented as $\frac {L(\alpha,\beta)}{\sin(\alpha+\beta)}$, where $L(\alpha, \beta)$ is a trigonometric polynomial with integer coefficients of degree at most $2c(n)$ and so the area of the triangle $PQR$ can be represented as 
$ \frac{A(\alpha, \beta)}{\sin^2(\alpha+\beta)}$, where $A(\alpha, \beta)$ is a trigonometric polynomial with integer coefficients of degree at most $4c(n)=4n^{\epsilon}$.
\

The estimates on $d$ imply:
 $|A(\alpha, \beta)|\leq D_{\delta}\sin^2(\alpha+\beta)c(n)d<e^{-bn^{\mu}}$, for some $b>0$.
\

\

\textbf{Note} that $A(\alpha,\beta)\neq 0$ because $p<q<r$ and the points $P$ and $R$ lie on a \textbf{\textsl{different}} side of $z$ direction than the point $Q$. This is an extremely important observation and it is this particular point of the proof which motivates our definition of a good position.
\

\

Let $\mathcal{F}_n$ be a set of trigonometric polynomials corresponding to unfolding combinatorics of length $c(n)=4n^{\epsilon}$.  Any polynomial is uniquely determined by the combinatorics and a choice of vertices. It implies that the cardinality  $|\mathcal{F}_n|< e^{tn^{\epsilon}}$, for some $t>0$.
We are now going to estimate the measure of  the following set of triangle angles: 
${\mathcal{B}}_n=\lbrace (\alpha, \beta)| \exists A\in{\mathcal{F}}_n :|A(\alpha,\beta)|<e^{-bn^{\mu}}\rbrace $.
\

The key point we use here is that a non-trivial trigonometric polynomial with integer coefficients can not be small on the set of large measure.
To make this point more precise we refer to the very useful theorem by Kaloshin and Rodnianski [6]  which can be formulated as follows:

\begin{Theorem}[Kaloshin, Rodnianski] There exist universal constants $R, c>0$ such that any non-zero trigonometric polynomial with integer coefficients $P$ in variables $\alpha, \beta, \gamma\in [0, 2\pi]$  of degree at most $m$ satisfies :
\

\

$Leb\lbrace (\alpha, \beta, \gamma): |P(\alpha, \beta, \gamma)|<e^{-Rm^{2}}\rbrace<e^{-cm}$.
\end{Theorem}

Any trigonometric polinomial $P(\alpha, \beta)$ in 2 variables can be considered as a polynomial $P(\alpha, \beta, \gamma)$ of three variables of the same degree, where the variable $\gamma$ is not present. 
Moreover any level set for $P$ in variables $\alpha, \beta, \gamma$ is obtained from the level set for $P$ in variables $\alpha, \beta$ by multiplying on segment $[0, 2\pi]$ in variable $\gamma$. Then an easy use of the Fubini theorem implies the following corollary:
\

\begin{Corollary} There exist universal positive constants $R, c$ such that any non-zero trigonometric polynomial with integer coefficients $P$ in variables $\alpha, \beta\in [0, 2\pi]$  of degree at most $m$ satisfies the following inequality:
\

\

$Leb\lbrace (\alpha, \beta): |P(\alpha, \beta)|<e^{-Rm^{2}}\rbrace<e^{-cm}$.
\end{Corollary}

We now pick $A\in{\mathcal{F}}_n$ and take $m=Fn^{\epsilon}$, where $F>4$ is a large enough constant to be chosen later.
\

By corollary 4.1:  $Leb\lbrace (\alpha, \beta): |A(\alpha, \beta)|<e^{-RF^{2}n^{2\epsilon}}\rbrace<e^{-cFn^{\epsilon}}$
\

We need now is to show that for large enough $n$: $e^{-bn^{\mu}}<e^{-RF^2n^{2\epsilon}}$ which
by comparing the highest degrees is true if \boldmath$2\epsilon<\mu$ \unboldmath.
\

 As $A\in{\mathcal{F}}_n$ it implies $Leb({\mathcal{B}}_n)<  e^{-cFn^{\epsilon}}|{\mathcal{F}}_n|<e^{(t-cF)n^{\epsilon}}$ and so if we choose $F>t/c$ then
$\sum Leb({\mathcal{B}}_n)<\infty$.
\

\

From the argument above it follows that under assumption $P_n>e^{\phi(n)}$ for large enough $n$ the pair $(\alpha, \beta)\in {\mathcal{B}}_n$
 and a standard Borel-Cantelly argument completes the proof for the appropriate choice of $\mu, \epsilon, \gamma$.

\subsection { Choice of constants}

Here we summarize all the assumptions on constants $\mu, \epsilon, \gamma>0$ which we met in the proof and choose a minimal $\mu$ satisfying them. 
\

\

$\begin{cases}\gamma\leq 1\\ \gamma+\epsilon>1\\  (\gamma+\epsilon)\mu>1 \\ \epsilon<\mu  \\  2\epsilon<\mu \end{cases}$
\

\

It is clear that we may take $\gamma =1$ and then the problem reduces to minimizing $\mu$ satisfying:
\

\

$\begin{cases} (1+\epsilon)\mu>1 \\  2\epsilon<\mu \end{cases}$
\

\

Taking the extreme case we get: $(1+\mu/2)\mu=1$,
\

which in turn implies that all the conditions above can be satisfied for any 
\

$\mu>\sqrt{ 3}-1$.
\
\end{proof}

\subsection{ Complexity estimate.}

In this section we use theorem 4.1 to get a global complexity estimate.
\

Let us fix arbitrary $\mu>\sqrt{3}-1$. By the theorem 4.1 for any triangle $\Delta$ from a full measure set $X$ of triangles and any vertex there exists a monotone sequence of times $n_i$ characterized by the property: $P_{n_i}<e^{n_i^{\mu}}$. Our aim now is to estimate the gap $n_{i+1}-n_i$.

\begin{Theorem} For any triangle $\Delta\in X$ and any $\epsilon>0$ under assumptions above for all $i$ large enough: $n_{i+1}-n_i<n_i^{1+\epsilon}$.
\end{Theorem}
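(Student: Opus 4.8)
The plan is to argue by contradiction, turning a large gap between consecutive good times into an all-bad window and then feeding that window into the local mechanism of Theorem 4.1. Recall that $\Delta\in X$ means, via the Borel--Cantelli step in Theorem 4.1, that there is a threshold $n_0=n_0(\Delta)$ with $(\alpha,\beta)\notin\mathcal{B}_n$ for all $n\geq n_0$. Since the $n_i$ exhaust the good times $\{m:P_m<e^{m^{\mu}}\}$, every index $m$ strictly between $n_i$ and $n_{i+1}$ is \emph{bad}, i.e. $P_m\geq e^{m^{\mu}}$. So a gap $n_{i+1}-n_i\geq n_i^{1+\epsilon}$ produces a block of bad indices of length at least $n_i^{1+\epsilon}$ immediately above $n_i$.

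First I would isolate the locality of the argument behind Theorem 4.1. Reading the chain Lemma 2.1 $\to$ Lemma 2.2 $\to$ Lemma 2.3 together with the main estimate, every quantity that appears --- the ratio $P_{N+c(n)}/P_N$, the counts of cutting points inside the relevant intervals, and the distance bound $d<e^{-an^{\mu}}$ producing the small value $|A(\alpha,\beta)|<e^{-bn^{\mu}}$ --- involves only the values $P_m$ for $m$ in the window $[n,\,n+k(n)c(n)]$, whose length is $k(n)c(n)=n^{1+\epsilon}$ since $\gamma=1$. Moreover, in the proof of Lemma 2.3 the hypothesis $P_m\geq e^{\phi(m)}$ is used only at the upper endpoint $m=n+k(n)c(n)$, the starting value $P_n$ being controlled by the trivial exponential upper bound. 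Re-running that short proof with only local information therefore gives the implication I want: if $P_m\geq e^{m^{\mu}}$ for every $m\in[n,\,n+k(n)c(n)]$, then the whole argument of Theorem 4.1 applies verbatim and yields $(\alpha,\beta)\in\mathcal{B}_n$.

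With this in hand the theorem follows. Fix $\epsilon>0$ and choose the internal exponent of the machinery slightly below $\epsilon$, so that the window length $n^{1+\epsilon'}$ with $\epsilon'<\epsilon$ stays admissible for Theorem 4.1 (the conditions $2\epsilon'<\mu$ and $(1+\epsilon')\mu>1$). Assume for contradiction that $n_{i+1}-n_i\geq n_i^{1+\epsilon}$ for some large $i$, and set $n=n_i+1$. For $n_i$ large we have $(n_i+1)^{1+\epsilon'}<n_i^{1+\epsilon}$, so the window $[n,\,n+k(n)c(n)]$ lies entirely inside the open gap $(n_i,n_{i+1})$; in particular every index in it is bad. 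The local form of Theorem 4.1 then forces $(\alpha,\beta)\in\mathcal{B}_{n_i+1}$, and since $n_i+1\geq n_0$ for large $i$ this contradicts $\Delta\in X$. Hence $n_{i+1}-n_i<n_i^{1+\epsilon}$ for all large $i$.

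The main obstacle I anticipate is making the locality claim fully rigorous: one must verify that re-deriving Lemma 2.3 from badness on the window alone still produces an intermediate index $N$ with $P_{N+c(n)}/P_N\geq 4+2c(n)$, and that this $N$ is itself bad so that $e^{c(n)}<P_N$ and Lemma 2.2 can be invoked exactly as in Theorem 4.1. The remaining difficulty is purely in the scale bookkeeping --- aligning the Lemma 2.3 window length $n^{1+\epsilon'}$ with the target exponent $1+\epsilon$ and absorbing the shift from $n_i$ to $n_i+1$ --- which is why the bound comes out as $n_i^{1+\epsilon}$ and why a margin $\epsilon'<\epsilon$ is convenient.
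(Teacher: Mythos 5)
Your locality observation about Lemma 2.3 is accurate, but the parameter bookkeeping at the heart of your plan is infeasible, and this is a genuine gap rather than a technicality. You require the internal exponent $\epsilon'$ to satisfy both $2\epsilon'<\mu$ and $(1+\epsilon')\mu>1$, and simultaneously to be ``slightly below'' an arbitrary target $\epsilon>0$. Since $\mu=\sqrt{3}-1+\delta<1$, the second condition forces $\epsilon'>\frac{1-\mu}{\mu}$, and at $\mu=\sqrt{3}-1$ one computes $\frac{1-\mu}{\mu}=\frac{\mu}{2}\approx 0.366$. So the admissible interval $\bigl(\frac{1-\mu}{\mu},\frac{\mu}{2}\bigr)$ for $\epsilon'$ is barely nonempty (that is precisely the extremal computation fixing $\sqrt{3}-1$), and $\epsilon'$ can never be taken small: your argument can only produce windows of length about $n^{1.366}$, hence at best the gap bound $n_{i+1}-n_i<n_i^{1+\epsilon}$ for $\epsilon>(\sqrt{3}-1)/2$. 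That bound is useless downstream: feeding it into the final theorem gives $P_n\leq e^{n_{i+1}^{\mu}}\leq e^{n^{(1+\epsilon')\mu}}$ with exponent $(1+\epsilon')\mu>1$, i.e.\ nothing beyond the trivial exponential estimate.

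The obstruction is structural. As long as the left endpoint of the window is controlled by the trivial bound $P_n\leq 3^n$ (which is all you have at $n=n_i+1$, a \emph{bad} time: badness gives a lower bound on $P_{n_i+1}$, which is useless for the denominator of the ratio $P_{N+c}/P_N$), the window must be long enough for $\phi$ to gain more than $n$ over it, i.e.\ of length at least roughly $n^{1/\mu}\approx n^{1.366}$. The paper's proof escapes exactly this trap, and in two coupled ways that your proposal misses. First, it anchors the window at the good time $n_i$ itself, where the strong upper bound $P_{n_i}<e^{n_i^{\mu}}$ is available; the key inequality becomes $\phi(n_i+k(n_i)c(n_i))-\phi(n_i)<k(n_i)\ln(4+2c(n_i))$, with $\phi(n_i)$ rather than $n_i$ subtracted on the left (this is what the paper's Remark inside the proof is pointing at). Second, it re-balances the splitting of the window, taking $k(n)=n^{\mu}$ and $c(n)=n^{1-\mu+\epsilon}$, so that the right-hand side has degree $\mu$ (up to a logarithm) instead of degree $1$. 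With both changes the contradiction only needs $(1+\epsilon)\mu>\mu$, which holds for \emph{every} $\epsilon>0$, and the Kaloshin--Rodnianski step needs $2(1-\mu+\epsilon)<\mu$, which holds for $\mu>\sqrt{3}-1$ and $\epsilon$ small. Either change alone is insufficient: with the good anchor but your split $k(n)=n$, $c(n)=n^{\epsilon'}$, the right-hand side still has degree $1$ and the fatal constraint $(1+\epsilon')\mu>1$ reappears. So the missing idea is not the locality of Theorem 4.1 but the use of the good-time upper bound at the left endpoint together with the rebalanced choice of $k$ and $c$.
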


\begin{proof}  In the proof we repeat the previous arguments with mild changes. First we introduce the following notations: 
$\phi(n)=n^{\mu}, k(n)=n^{\mu}, c(n)=n^{1-\mu+\epsilon}$.

\begin{Lemma} Assume that for a fixed triangle and for some $i$ large enough  $n_{i+1}-n_{i}>n_i^{1+\epsilon}$. Then there exists an integer $N$:
 $n_i<N<n_{i+1} $ such that
 $P_{N+c(n_i)}/P_{N}\geq 4+2c(n_i)$.
\end{Lemma}

\begin{proof} From the Lemma assumptions it follows that $n_i+k(n_i)c(n_i)<n_{i+1}$. Divide the interval $[n_i+1, n_i+k(n_i)c(n_i)]$ into $k(n_i)$ intervals of length $c(n_i)$: 
\

$I_1=[n_i+1, n_i+c(n_i)]$, $I_2=[n_i+c(n_i)+1, n_i+2c(n_i)]$,$\ldots$, $I_{k(n_i)}=[n_i+(k(n_i)-1)c(n_i)+1, n_i+k(n_i)c(n_i)]$.
\

We prove by contradiction that for one of the intervals $I_l$:  $P_{N+c(n_i)}/P_N\geq 4+2c(n_i)$, where $N=n_i+lc(n_i)$.
\

Assume that for all the intervals $I_l$: $P_{n_i+(l+1)c(n_i)}/P_{n_i+lc(n_i)}<4+2c(n_i)$. Then $P_{n_i+k(n_i)c(n_i)}/P_{n_i}<(4+2c(n_i))^{k(n_i)}$.
\

On the other hand $P_{n_i+k(n_i)c(n_i)}/P_{n_i}>e^{\phi(n_i+k(n_i)c(n_i))}/e^{\phi(n_i)}$ which implies:
\

\

$\phi(n_i+k(n_i)c(n_i))-\phi(n_i)<k(n_i)\ln(4+2c(n_i))$
\

\

\textbf{Remark.} Notice the difference of this inequality from the similar one in the proof of Lemma 2.3.
\

\

Comparing the highest degrees of both sides implies
 $(1+\epsilon)\mu\leq \mu$ and so we get a contradiction. 
\end{proof}

In order to use Lemma 2.2. we need to make sure that $e^{c(n_i)}<P_{N}$. Since $N>n_i$ it is enough to establish that $c(n_i)<P_{n_i+1}$.
\

Since $P_{n_i+1}>\phi(n_i+1)> (n_i+1)^{\mu}$ and $c(n_i)=n_i^{1-\mu+\epsilon}$ comparing the highest degrees gives: $1-\mu+\epsilon<\mu$ which is true for any $\epsilon$ small enough as $\mu>\sqrt{3}-1$.
\

\

We apply Lemma 2.2 to the sequence of partitions ${\xi}_N, {\xi}_{N+1},\ldots, {\xi}_{N+c(n_i)}$ and find that there are three points $x_p, x_q, x_r$ in a good position with indexes $p, q, r$ in the range 
$[N+1,\ldots,N+c(n_i)]$ and with pairwise distances bounded by 
$d=e^{c(n_i)}/P_{n_i+1}<e^{n_i^{1-\mu+\epsilon}}/e^{N^{\mu}}\leq e^{n_i^{1-\mu+\epsilon}}/e^{{(n_i+1)}^{\mu}}$.
\

\

As in the proof of theorem 4.1. we consider vertices $P, Q, R$ corresponding to the unfolding along generalized diagonals $x_p, x_q, x_r$ and entirely repeating the argument of theorem 4.1. we get that the area of the triangle $PQR$ can be represented as $\frac{A(\alpha,\beta)}{{\sin}^2(\alpha+\beta)}$, where
 $A(\alpha, \beta)$ is a trigonometric polynomial with integer coefficients of degree at most $4c(n_i)$.
\

Moreover, again, by repeating the argument of Theorem 4.1. we have the estimate: $0\neq |A(\alpha, \beta)|<D_{\delta}{\sin}^2(\alpha+\beta)c(n_i)d<e^{-bn_i^{\mu}}$ for some $b>0$.
\

\

Let $\mathcal{G}_n$ be a set of trigonometric polynomials corresponding to unfolding combinatorics of length $c(n)=4n^{1-\mu+\epsilon}$.  Any polynomial is determined by the combinatorics and a choice of vertices. It implies  $|\mathcal{G}_n|< e^{tn^{1-\mu+\epsilon}}$, for some $t>0$.
\

We are now going to estimate the measure of  the following set ${\mathcal{C}}_n$ of triangle angles: 
${\mathcal{C}}_n=\lbrace (\alpha, \beta)| \exists A\in{\mathcal{G}}_n :|A(\alpha,\beta)|<e^{-bn^{\mu}}\rbrace $.
\

\

We now pick $A\in{\mathcal{G}}_n$ and take $m=Fn^{1-\mu+\epsilon}$, where $F>4$ is a large enough constant to be chosen later.
\

By corollary 4.1:  $Leb\lbrace (\alpha, \beta): |A(\alpha, \beta)|<e^{-RF^{2}n^{2(1-\mu+\epsilon)}}\rbrace<e^{-cFn^{(1-\mu+\epsilon)}}$.
\

We need now to show that for large enough $n$: $e^{-bn^{\mu}}<e^{-RF^2n^{2(1-\mu+\epsilon)}}$ .
\

Comparing the degrees we get: $2(1-\mu+\epsilon)<\mu$ which is true for any $\mu>\sqrt{3}-1$ and $\epsilon<0.01$.
\

\

The argument above implies: $Leb({\mathcal{C}}_n)<e^{-cFn^{(1-\mu+\epsilon)}}|{\mathcal{G}}_n|<e^{(t-cF)n^{(1-\mu+\epsilon)}}$
\

\

If we choose $F>t/c$ then $\sum Leb({\mathcal{C}}_n)<\infty$.
\

Let $Y$ be a subset of $X$ such that for any triangle $\Delta\in Y$ there is an infinite subsequence of times $n_i$ such that $n_{i+1}-n_{i}>n_i^{1+\epsilon}$. 
Then $\Delta\in{\mathcal{C}}_{n_i}$ for infinitely many $n_i$ and so by Borel-Cantelly argument $Leb(Y)=0$.
\end{proof}

We fiinally have all the tools to prove the main theorem.
\

\begin{Theorem}[Explicit sub-exponential estimate] For any $\epsilon>0$ and a typical triangle: $P_n<Ce^{n^{\sqrt{3}-1+\epsilon}}$ for some $C>0$.
\end{Theorem}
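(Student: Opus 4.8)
The plan is to combine the two preceding results, Theorem 4.1 and Theorem 4.2, using nothing more than the monotonicity of $P_n$ in $n$. Theorem 4.1 guarantees, for a full measure set $X$ of triangles and any fixed $\mu>\sqrt{3}-1$, an infinite increasing sequence of ``good times'' $n_i$ along which $P_{n_i}<e^{n_i^{\mu}}$. Theorem 4.2 then controls the gaps in this sequence: for every auxiliary $\epsilon_0>0$ and every $\Delta\in X$ one has $n_{i+1}-n_i<n_i^{1+\epsilon_0}$ for all $i$ large enough. The whole point is that the good times are dense enough that the (unknown) value of $P_n$ for $n$ between $n_i$ and $n_{i+1}$ cannot exceed the good bound at $n_{i+1}$ by much, so that interpolating between consecutive good times promotes the subsequential estimate of Theorem 4.1 to a global one, at the cost of only a slightly larger exponent.

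First I would fix the target $\epsilon>0$ and calibrate the parameters. Writing $\mu=\sqrt{3}-1+\delta$ and invoking Theorem 4.2 with a small $\epsilon_0$, I would arrange the single strict inequality $(1+\epsilon_0)\mu<\sqrt{3}-1+\epsilon$. This is achievable because $(1+\epsilon_0)(\sqrt{3}-1+\delta)\to\sqrt{3}-1$ as $\delta,\epsilon_0\to 0$, so it suffices to choose $\delta,\epsilon_0$ with $\delta+\epsilon_0(\sqrt{3}-1)+\epsilon_0\delta<\epsilon$. Note that the same $\mu$ is used to produce the subsequence in Theorem 4.1 and to feed Theorem 4.2; intersecting the full measure set $X$ of Theorem 4.1 with the full measure set on which the gap bound of Theorem 4.2 holds, both conclusions hold simultaneously off a common null set, i.e. for a typical triangle.

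Next, for an arbitrary large $n$ I would locate the unique index $i$ with $n_i\leq n<n_{i+1}$. Monotonicity of $P_n$ gives $P_n\leq P_{n_{i+1}}<e^{n_{i+1}^{\mu}}$, so everything reduces to bounding $n_{i+1}$ in terms of $n$. Since $n_i\leq n$, the gap estimate yields $n_{i+1}<n_i+n_i^{1+\epsilon_0}\leq n+n^{1+\epsilon_0}<2n^{1+\epsilon_0}$ for $n\geq 1$. Substituting, $P_n<e^{(2n^{1+\epsilon_0})^{\mu}}=e^{2^{\mu}n^{(1+\epsilon_0)\mu}}$. Because $(1+\epsilon_0)\mu<\sqrt{3}-1+\epsilon$ strictly, the higher power $n^{\sqrt{3}-1+\epsilon}$ eventually dominates the constant multiple $2^{\mu}n^{(1+\epsilon_0)\mu}$, so $P_n<e^{n^{\sqrt{3}-1+\epsilon}}$ for all $n$ beyond some threshold $n_0$. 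Absorbing the finitely many remaining values $n\leq n_0$ into the constant $C=\max\bigl(1,\max_{n\leq n_0}P_n e^{-n^{\sqrt{3}-1+\epsilon}}\bigr)$ then gives $P_n<Ce^{n^{\sqrt{3}-1+\epsilon}}$ for all $n$.

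The analytic content has already been spent in Theorems 4.1 and 4.2; the interpolation itself is elementary. The only genuine subtlety, which I expect to be the main place to be careful, is the bookkeeping of constants: one must check that the parameter $\mu$ chosen to extract the subsequence is literally the same $\mu$ fed into the gap estimate, that both statements hold off one common null set, and that the two sources of inflation in the exponent, namely the factor $2^{\mu}$ and the passage from $\mu$ to $(1+\epsilon_0)\mu$, can both be hidden under the target exponent $\sqrt{3}-1+\epsilon$. Once the single inequality $(1+\epsilon_0)\mu<\sqrt{3}-1+\epsilon$ is secured, the monotonicity argument closes the estimate for a typical triangle.
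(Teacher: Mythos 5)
Your proposal is correct and follows essentially the same route as the paper: interpolate between consecutive good times $n_i\leq n<n_{i+1}$ using monotonicity of $P_n$, the gap bound, and a calibration of $\delta,\epsilon_0$ so that the inflated exponent $(1+\epsilon_0)\mu$ stays below $\sqrt{3}-1+\epsilon$. The only (cosmetic) difference is that where the paper hides the additive gap by ``slightly perturbing $\delta$'' to get $n_{i+1}<n_i^{1+\delta}$, you bound $n_{i+1}<2n^{1+\epsilon_0}$ and absorb the factor $2^{\mu}$ via the strict inequality --- a slightly more careful rendering of the same bookkeeping.
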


\begin{proof}Consider a triangle $\Delta\in X\setminus Y$.  It is enough to prove an estimate in case of one vertex. Pick any $\epsilon>0$ small enough and then pick positive $\delta\ll\epsilon$ small enough. Consider a sequence $n_i$ corresponding to $\mu=\sqrt{3}-1+\delta$. By Theorem 4.3 for all $i$ large enough: $n_{i+1}<n_{i}+n_{i}^{1+\delta}$. As we are able to slightly perturb $\delta$ if needed we may assume: $n_{i+1}<n_i^{1+\delta}$.
\

We then pick $n$ large enough and find $i$ such that $n_i\leq n<n_{i+1}$. By monotonicity $P_{n_i}\leq P_n\leq P_{n_{i+1}}$, so
\

\ 

$P_n\leq e^{n_{i+1}^{\sqrt{3}-1+\delta}}\leq e^{n_i^{(1+\delta)(\sqrt{3}-1+\delta)}}\leq e^{n^{(1+\delta)(\sqrt{3}-1+\delta)}}\leq e^{n^{\sqrt{3}-1+\epsilon}}$.
\end{proof}
\

\

\textbf{Remark.} It is a well-known observation (see, say [8]) that the number of different combinatorial  types of periodic orbits of length not greater than $n$ for a polygonal billiard is bounded above by $P_n$ and so as a corollary of Theorem 4.4 we have:
\

\begin{Theorem}[Growth of periodic orbits] For any $\epsilon>0$ and a typical triangle: $Per_n<Ce^{n^{\sqrt{3}-1+\epsilon}}$ for some $C>0$, where $Per_n$ is a number of different combinatorial types of periodic orbits of length not greater than $n$.
\end{Theorem}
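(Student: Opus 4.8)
The plan is to deduce this statement directly from Theorem 4.4 by comparing the periodic-orbit count $Per_n$ with the generalized-diagonal count $P_n$. The essential input is the classical observation, attributed to Katok [8], that
\[
Per_n \le P_n
\]
(or, at worst, $Per_n \le P_{Cn}$ for a constant $C$ depending only on the triangle), so that any upper bound on $P_n$ transfers automatically to $Per_n$. Thus I would first recall why this comparison holds, and then simply substitute the estimate of Theorem 4.4.

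To justify the comparison I would use the unfolding construction of Section 3. Fix a combinatorial type of periodic orbit of discrete length at most $n$ and consider the family of all parallel billiard trajectories sharing this combinatorics. Unfolding along the orbit turns each such trajectory into a straight segment, and periodicity means the terminal copy of the triangle is a parallel translate of the initial one. The parallel trajectories with the fixed combinatorics fill a maximal open strip (a cylinder); as one pushes to its boundary, the straightened trajectory first meets a vertex of some unfolded copy, which is precisely a generalized diagonal whose discrete length is at most the period, hence at most $n$ up to the uniform length comparison of Section 4. Distinct combinatorial types produce strips with disjoint interiors and therefore distinct bounding generalized diagonals, which yields the injection underlying $Per_n \le P_n$; the passage from the per-vertex count to the total count over the three vertices costs only a bounded factor that is harmless for the asymptotics.

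With the comparison in hand the conclusion is immediate. For a triangle $\Delta \in X\setminus Y$ and any $\epsilon>0$, choose $\epsilon'<\epsilon$ and apply Theorem 4.4 to obtain $P_m < C' e^{m^{\sqrt{3}-1+\epsilon'}}$ for all $m$. Taking $m=Cn$ gives
\[
Per_n \le P_{Cn} < C' e^{(Cn)^{\sqrt{3}-1+\epsilon'}} = C' e^{C^{\sqrt{3}-1+\epsilon'}\, n^{\sqrt{3}-1+\epsilon'}}.
\]
Since $C^{\sqrt{3}-1+\epsilon'}\, n^{\sqrt{3}-1+\epsilon'} = o\!\left(n^{\sqrt{3}-1+\epsilon}\right)$, the prefactor multiplying the exponent is absorbed for all large $n$, and enlarging the constant to cover the finitely many remaining values of $n$ yields $Per_n < C e^{n^{\sqrt{3}-1+\epsilon}}$.

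The only genuinely nontrivial point is the geometric comparison $Per_n \le P_{Cn}$; the remainder is a routine substitution. Since the strip argument localizes each periodic family against a vertex-hitting trajectory of comparable length, I expect no real obstacle here beyond the bookkeeping of the length constant $C$, which is harmless because the target exponent $\sqrt{3}-1+\epsilon$ leaves room to spare over $\sqrt{3}-1+\epsilon'$.
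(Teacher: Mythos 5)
Your proposal is correct and follows essentially the same route as the paper: the paper's entire proof is the remark that $Per_n\le P_n$ is a well-known observation (cited to Katok [8]) together with the bound of Theorem 4.4, which is exactly your reduction. Your additional unfolding/strip sketch of the comparison and your bookkeeping for a possible length constant $C$ are harmless elaborations of what the paper simply takes as known.
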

\newpage

\end{document}